\begin{document}

\newtheorem{assumption}{Assumption}[section]
\newtheorem{definition}{Definition}[section]
\newtheorem{lemma}{Lemma}[section]
\newtheorem{proposition}{Proposition}[section]
\newtheorem{theorem}{Theorem}[section]
\newtheorem{corollary}{Corollary}[section]
\newtheorem{remark}{Remark}[section]
\newtheorem{conjecture}{Conjecture}[section]
\newtheorem{example}{Example}[section]

\small

\title{A Computational Approach to Extinction Events in Chemical Reaction Networks with Discrete State Spaces}
\author{Matthew D. Johnston\\Department of Mathematics\\San Jos\'{e} State University\\San Jos\'{e}, CA, 95192 USA}
\date{}
\maketitle

%\tableofcontents

\vspace{0.2in}
%{\color{red}Matt's edits in red}
%\vspace{0.2in}

\begin{abstract}
\small

Recent work of M.D. Johnston \emph{et al.} has produced sufficient conditions on the structure of a chemical reaction network which guarantee that the corresponding discrete state space system exhibits an extinction event. The conditions consist of a series of systems of equalities and inequalities on the edges of a modified reaction network called a domination-expanded reaction network. In this paper, we present a computational implementation of these conditions written in Python and apply the program on examples drawn from the biochemical literature, including a model of polyamine metabolism in mammals and a model of the pentose phosphate pathway in \emph{Trypanosoma brucei}. We also run the program on 458 models from the European Bioinformatics Institute's BioModels Database and report our results.
 %We also summarize the results of the algorithm run on the European Bioinformatics Institute's BioModels Database.
\end{abstract}

\noindent \textbf{Keywords:} reaction network, discrete extinction, stochastic process, algorithms, linear programming \newline \textbf{AMS Subject Classifications:} 	92C42, 90C90

\bigskip

\section{Introduction}
\label{introduction}

A chemical reaction network describes the conversion of chemical reactants into products as a directed graph where the vertices are aggregates of species called complexes and the edges are reactions. A dynamical system can be associated to such a network in several ways. When the counts of reacting species are high, as is typical in industrial chemistry and pharmacology, the system can be reasonably modeled with a system of differential equations over the continuous state space of reactant concentrations. When the count of reacting species is low, however, as is typical in genetic and enzymatic systems in systems biology, it is more accurate to model the system over the discrete state space of molecular counts, for instance as a continuous-time Markov chain \cite{Kurtz2,AndKurtz2015,AndKurtz2011} or stochastic Petri Net \cite{Bause-Kritzinger,PetriNet/review/Pet1977}.

Recent work has focused on when the long-term behaviors predicted by the continuous and discrete state models are markedly different. While classical work of T. Kurtz guarantees that the probability density function of the discrete model converges in an appropriate scaling limit to the solution of the continuous state model on compact time intervals $[0,T]$, $T < \infty$ \cite{Kurtz2}, on the unbounded interval $[0,\infty)$ this convergence may fail. For example, consider the following network, which was presented as Example 2.1 in the companion paper \cite{J-A-C-B}:
\begin{center}
\begin{tikzpicture}[auto, outer sep=3pt, node distance=2cm,>=latex']
\node (C1) {$X_1+X_2$};
\node [right of = C1, node distance = 2.5cm] (C2) {$2X_2$};
\node [below of = C2, node distance = 1.0cm] (C3) {$X_1$};
\node [left of = C3, node distance = 2.5cm] (C4) {$X_2$};
\path[->, bend left = 10] (C1) edge node [above left = -0.15cm] {\tiny $1$} (C2);
\path[->, bend left = 10] (C2) edge node [below right = -0.15cm] {\tiny $2$} (C1);
\path[->] (C4) edge node [above = -0.15cm] {\tiny $3$} (C3);
%\path[dashed,->] (C1) edge [right] node {$D_1$} (C3);
%\path[dashed,->] (C1) edge [left] node {$D_1$} (C4);
%\path[dashed,->] (C2) edge [right] node {$D_2$} (C3);
\end{tikzpicture}
\end{center}
%\begin{align*}
%	\ce{$X_1+X_2$ &<=>[$1$][$2$] $2X_2$}\\
%	\ce{$X_2$ &->[$3$] $X_1$}
%\end{align*}
where the numbers correspond to the enumeration of the reactions. For almost all initial conditions and parameter values, the continuous state differential equation model predicts convergence to a strictly positive steady state. For the discrete state model, however, the reaction $X_2 \to X_1$ may irreversibly deplete the species $X_2$ so that the inevitable final state of the system for $M = \mathbf{X}_1(0) + \mathbf{X}_2(0)$ is $\{ \mathbf{X}_1 = M, \mathbf{X}_2 = 0\}$.

Extinction events in discrete state space models were studied in the context of chemical reaction network theory by D.F. Anderson \emph{et al.} in \cite{A-E-J}. In that paper, the authors showed that a large subset of networks which exhibit ``absolute concentration robustness'' in the continuous state model \cite{Sh-F} exhibit an extinction event when modeled with a discrete state space. A primary extension made in that paper was generalizing the notion of ``differing in one species'' to a domination relationship between complexes. This domination relation, and the corresponding relationship with extinction events in discrete state space systems, were further generalized and adopted to the conventions of Petri Net Theory by R. Brijder in \cite{Brijder}. Finally, in this paper's companion paper \cite{J-A-C-B}, M.D. Johnston \emph{et al.} clarify this relationship by presenting sufficient conditions for an extinction event which depend upon the evaluation of computationally-tractable systems of equalities and inequalities.

The method presented in \cite{J-A-C-B} involves creating a modified reaction network called a domination-expanded network. For example, for the network above, we may correspond the following expanded network:
\begin{center}
\begin{tikzpicture}[auto, outer sep=3pt, node distance=2cm,>=latex']
\node (C1) {$X_1+X_2$};
\node [right of = C1, node distance = 2.5cm] (C2) {$2X_2$};
\node [below of = C2, node distance = 1.5cm] (C3) {$X_2$};
\node [left of = C3, node distance = 2.5cm] (C4) {$X_1$};
\path[->, bend left = 10] (C1) edge node [above left = -0.15cm] {\tiny $1$} (C2);
\path[->, bend left = 10] (C2) edge node [below right = -0.15cm] {\tiny $2$} (C1);
\path[->] (C3) edge node [above = -0.15cm] {\tiny $3$} (C4);
\path[dashed,->] (C1) edge [right] node {\tiny $D_1$} (C3);
%\path[dashed,->] (C1) edge [left] node {$D_1$} (C4);
\path[dashed,->] (C2) edge [right] node {\tiny $D_2$} (C3);
\end{tikzpicture}
\end{center}
where the dashed edges are called ``domination reactions.'' This network is then associated by certain rules with some systems of equalities and inequalities. The main result of \cite{J-A-C-B} states that, \emph{if any of these systems cannot be satisfied then the discrete state space system must exhibit an extinction event}.

For example, to the domination-expanded network above we associate the following system of equalities and inequalities on the vector $\alpha = ((\alpha_R)_1,(\alpha_R)_2,(\alpha_R)_3,(\alpha_D)_1,(\alpha_D)_2)) \in \mathbb{Z}_{\geq 0}^5$, where the entries correspond to the reaction edges:
\[\left\{ \; \; \begin{split} & \hspace{0.17in} (\alpha_R)_1 = 0, \; (\alpha_D)_2 = 0 \\ & -(\alpha_R)_1 + (\alpha_R)_2 + (\alpha_R)_3 = 0 \\ & \hspace{0.17in} (\alpha_R)_1 - (\alpha_R)_2 - (\alpha_R)_3 = 0 \\ & \hspace{0.17in} (\alpha_R)_3 \geq (\alpha_D)_1 \geq (\alpha_R)_2 \geq 0. \end{split} \right.\]
Since this system has no nontrivial solution, we may conclude that the discrete state space system has an extinction event.

In this paper, we implement the conditions of the companion paper \cite{J-A-C-B} for affirming an extinction event in discrete state space systems into a computational package written in Python. The program utilizes a series of mixed-integer linear programming (MILP) modules for verifying the technical conditions required to generate the governing systems of equalities and inequalities, and also for evaluating these resulting systems. We then run the algorithm on 458 models from the European Bioinformatics Institute's BioModels Database and report our results. As an illustration of the power of the program, we further analyze a model of polyamine metabolism in mammals and a model of the pentose phosphate pathway in \emph{Trypanosoma brucei}, which were identified by the BioModels database run as exhibiting an extinction event.

We adopt the following notation throughout the paper:
\begin{itemize}
\item
$\mathbb{R}_{\geq 0} = \{x \in \mathbb{R} \mid x \geq 0\}$ and $\mathbb{R}_{> 0} = \{x \in \mathbb{R} \mid x > 0\}$,
%$\mathbb{R}_{\geq 0}^n = \left\{ (x_1, \ldots, x_n) \in \mathbb{R}^n \; | \; x_1 \geq 0, \ldots, x_n \geq 0 \right\}$,
%\item
%$\mathbb{R}_{> 0}^n = \left\{ (x_1, \ldots, x_n) \in \mathbb{R}^n \; | \; x_1 > 0, \ldots, x_n > 0 \right\}$,
%\todoi{RB: What about replacing these two statements by: ``Let $\mathbb{R}_{\geq 0} = \{x \in \mathbb{R} \mid x \geq 0\}$ and $\mathbb{R}_{> 0} = \{x \in \mathbb{R} \mid x > 0\}$'', because now it isn't stated that $x_i \in \mathbb{R}$ and the ${}^n$ notation is standard.}
\item
for $\mathbf{v} = (v_1,\ldots,v_n) \in \mathbb{R}_{\geq 0}^n$, we define $\mathrm{supp}(\mathbf{v}) = \{ i \in \{1,\dots,n\} \; | \; v_i > 0 \}$,
\item
for a set $X = \{ X_1, X_2, \ldots, X_n \}$ of indexed elements and a subset $W \subseteq X$, we define $\mathrm{supp}(W) = \{ i \in \{1,\dots,n\} \; | \; X_i \in W \}$,
%\todoi{RB: when do we use $\mathrm{supp}(W)$ in the main text?}
\item
for a subset $W \subseteq X$, we define the complement $W^c = \{ x \in X \; | \; x \not\in W \}$,
\item
for $\mathbf{v}, \mathbf{w} \in \mathbb{R}^n$, we define $\mathbf{v} \leq \mathbf{w}$ if $v_i \leq w_i$ for each $i \in \{1,\dots,n\}$.
\end{itemize}

\section{Background}
\label{background}

In this section, we outlined the definitions, terminology, and background results necessary for our study of extinction events. We follow the notation of \emph{chemical reaction network theory} (CRNT) \cite{F3}. As this background largely follows that of the companion paper \cite{J-A-C-B}, we will be brief and use the example presented in the introduction as a running example.

%We outline the background notation and terminology relevant to the study of \emph{chemical reaction network theory} (CRNT). (For further background, see Martin Feinberg's online lecture notes \cite{F3}.)

\subsection{Chemical Reaction Networks}
\label{crntsection}

%We will follow the terminology of \emph{CRN theory} (CRNT) when describing discrete chemical reaction systems.

The following is the basic object of study in CRNT.

\begin{definition}
\label{crn}
A \textbf{chemical reaction network} (CRN) is given by a triple of finite sets $(\mathcal{S},\mathcal{C},\mathcal{R})$ where:
\begin{enumerate}
\item
The \textbf{species set} $\mathcal{S} = \{ X_1, \ldots, X_m \}$ contains the species of the network.
\item
The \textbf{complex set} $\mathcal{C} = \{ y_1, \ldots, y_n \}$ contains linear combination of the species of the form $y_i = \sum_{j=1}^m y_{ij} X_j$. The values $y_{ij} \in \mathbb{Z}_{\geq 0}$ are called \textbf{stoichiometric coefficients}. Allowing a slight abuse of notation, we will let $y_i$ denote both the $i$th complex and the corresponding complex vector $y_i = (y_{i1}, y_{i2}, \ldots, y_{im}) \in \mathbb{Z}_{\geq 0}^m$.
\item
The \textbf{reaction set} $\mathcal{R} = \{ R_1, \ldots, R_r \}$ consists of ordered pairs of complexes, e.g. $(y_i,y_j) \in \mathcal{R}$. We furthermore define the mappings $\rho, \rho': \{ 1, \ldots, r \} \mapsto \{ 1, \ldots, n \}$ such that $\rho(k) = i$ if $y_i$ is the source complex (tail of arrow) of the $k$th reaction, and $\rho'(k) = j$ if $y_j$ is the product complex (head of arrow) of the $k$th reaction, so that we may write $R_k = (y_{\rho(k)},y_{\rho'(k)})$. We also represent reactions with directed arrows, e.g. $y_i \to y_j$ or $y_{\rho(k)} \to y_{\rho'(k)}$.
\end{enumerate}
\end{definition}

\noindent We will assume that (i) every species is contained in at least one complex; and (ii) every complex is contained in at least one reaction.

%\noindent It is often convenient to enumerate the complexes explicitly as $\mathcal{C} = \{ y_1, \ldots, y_n \}$ where $y_i = (y_{i1}, \ldots, y_{im})^T \in \mathbb{Z}_{\geq 0}^m$. In this setting, we define the mappings between the source and target complexes of each reaction, respectively, as $\rho, \rho': \{1,\dots,r\} \mapsto \{1,\dots,n\}$ and represent the $k$th reaction as $(y_{\rho(k)},y_{\rho'(k)})$ or, equivalently, $y_{\rho(k)} \to y_{\rho'(k)}$.

%It is common to impose that a CRN does not contain any self-loops (i.e. reactions of the form $y \to y$). Since this assumption is not used in our results, and since it is common to allow self-loops in Petri Net Theory, we will not make this assumption here.

The interpretation of reactions as directed edges naturally gives rise to a reaction graph $G=(V,E)$ where the set of vertices is given by the complexes (i.e.~$V = \mathcal{C}$) and the set of edges is given by the reactions (i.e.~$E = \mathcal{R})$. A maximal set of connected complexes is called a \emph{linkage class} (LC) while the maximal set of strongly connected complexes is called a \emph{strong linkage class} (SLC). An SLC will be called \emph{terminal} if there are no outgoing edges. A complex is terminal if it is contained in a terminal SLC. A subset of complexes $\mathcal{Y} \subseteq \mathcal{C}$ is called an \emph{absorbing complex set} if it contains every terminal complex and has no outgoing edges, i.e. $y_i \to y_j \in \mathcal{R}$ and $y_i \in \mathcal{Y}$ implies $y_j \in \mathcal{Y}$. Complexes $y_i \in \mathcal{Y}$ are called $\mathcal{Y}$-interior while complexes $y_i \not\in \mathcal{Y}$ are called $\mathcal{Y}$-exterior. Similarly, reactions $y_i \to y_j \in \mathcal{R}$ where $y_i \in \mathcal{Y}$ are called $\mathcal{Y}$-interior and $\mathcal{Y}$-exterior otherwise.

%\begin{enumerate}[(i)]
%
%\item A complex $y$ is \emph{connected} to a complex $y'$ if there exists a sequence of complexes $y = y_{\mu(1)}, y_{\mu(2)}, \ldots, y_{\mu(\ell)} = y'$ such that either $y_{\mu(k)} \to y_{\mu(k+1)}$ or $y_{\mu(k+1)} \to y_{\mu(k)}$ for all $k \in \{1,\dots,\ell-1\}$.
%
%\item There is a \emph{path} from $y$ to $y'$ if there is a sequence of distinct complexes such that $y = y_{\mu(1)} \to y_{\mu(2)} \to \cdots \to y_{\mu(\ell)} = y'$.
%
%\item A maximal set of connected complexes is called a \emph{linkage class} (LC) while a maximal set of path-connected complexes is called a \emph{strong linkage class} (SLC). %Note that an SLC may consist of a singleton complex if the complex is contained in no cycles in the reaction graph.
%The set of linkage classes will be denoted $\mathcal{L}$ %= \{ L_1, \ldots, L_{\ell} \}
%while the set of SLCs will be denoted $\mathcal{W}$. % = \{ W_1, \ldots, W_{w} \}$
%
%\item
%An strong linkage class $T$ is called a \emph{terminal strong linkage class} if there are no outgoing reactions, i.e.~$y\in T$ and $y\to y'\in \mathcal{R}$ implies $y'\in T$. The set of terminal SLCs will be denoted $\mathcal{T} \subseteq \mathcal{W}$. A complex $y \in \mathcal{C}$ is called terminal if it belongs to a terminal SLC, and a reaction $y \to y' \in \mathcal{R}$ is terminal if $y$ is terminal. % and a CRN is called \emph{weakly reversible} if $\mathcal{L} = \mathcal{W} = \mathcal{T}$.% Properties of terminal SLCs has been studied significantly in CRNT \cite{Fe2,Fe4,Gunawardena,Boros2013-1}.
%\end{enumerate}

We associate the following structural matrices to a CRN. These matrices will be useful in both defining the dynamical system we are interested in, and in the computational implementation of our conditions for an extinction event.

\begin{enumerate}
\item
The \textbf{complex matrix} $Y \in \mathbb{Z}_{\geq 0}^{m \times n}$ is the matrix which has columns $Y_{\cdot,i} = y_i$, $i \in \{1, \ldots, n\},$ where $y_i$ is the complex vector for the $i$th complex.
\item
The \textbf{adjacency matrix} $I_a \in \{ -1, 0, 1 \}^{n \times r}$ is the matrix with entries $[I_a]_{i,j} = -1$ if $\rho(j)=i$, $[I_a]_{i,j} = 1$ if $\rho'(j)=i$, and $[I_a]_{i,j} = 0$ otherwise.
\item
The \textbf{stoichiometric matrix} $\Gamma \in \mathbb{Z}^{m \times r}$ is the matrix with columns $\Gamma_{\cdot,k} = y_{\rho'(k)}-y_{\rho(k)}$ for $k=\{ 1, \ldots, r\}$.
\item
The \textbf{source matrix} $I_s \in \{ 0, 1 \}^{r \times n}$ is the matrix with entries $[I_s]_{i,j} = 1$ if $\rho(i)=j$, and $[I_s]_{i,j} = 0$ otherwise.
\item
The \textbf{unweighted Laplacian} $A \in \mathbb{Z}^{n \times n}$ is the matrix with entries $A_{i,j} = -\mbox{outdeg}(y_i)$ if $i=j$, $A_{i,j} = 1$ if $y_j \to y_i \in \mathcal{R}$, and $A_{i,j} = 0$ otherwise.
\end{enumerate}

\noindent Notice that $\Gamma = Y \cdot I_a$ and $A = I_a \cdot I_s$, where $\Gamma$ is the stoichiometric matrix, and that $I_s$ may be obtained by replacing the negative entries in $I_a$ with ones, setting the remaining entries to zero, and taking the transpose. It therefore suffices to know $Y$ (stoichiometry of CRN) and $I_a$ (connectivity of CRN) in order to generate all of the required structural data of a given CRN.

It is typical in CRNT to use a Laplacian where the entries are weighted by the rate constant of the corresponding reaction \cite{H-J1,F3}. By contrast, the unweighted Laplacian simply encodes the connectivity structure of the complexes in the reaction graph. Although we will not need to consider weighted Laplacians in this paper, we will use well-known properties of them to relate $\mbox{ker}(A)$ to the terminal SLCs of the reaction graph \cite{H-F}. 

To each reaction $y_i \to y_j \in \mathcal{R}$ we associate a \emph{reaction vector} $y_j - y_i \in \mathbb{Z}^m$ which tracks the net gain and loss of each chemical species as a result of the  occurrence of this reaction. The \emph{stoichiometric subspace} is defined as
\[S = \mbox{span} \left\{ y_j - y_i \in \mathbb{Z}^m \; | \; y_i \to y_j \in \mathcal{R} \right\}.\]
\noindent A CRN is \emph{conservative} if there exists a $\mathbf{c} \in \mathbb{R}_{> 0}^m$ such that $\mathbf{c}^T \Gamma = \mathbf{0}^T$, while it is \emph{subconservative} if $\mathbf{c}^T \Gamma \leq \mathbf{0}^T$.%The vector $\mathbf{c}$ is called a \textit{conservation vector}.

\begin{example}
\label{example1}
Reconsider the CRN given in the introduction, which was taken from Example 2.1 of \cite{J-A-C-B}. We have the sets $\mathcal{S} = \{ X_1, X_2\}$, $\mathcal{R} = \{ X_1 + X_2 \to 2X_2, 2X_2 \to X_1 + X_2, X_2 \to X_1 \}$, and $\mathcal{C} = \{ X_1 + X_2, 2X_2, X_2, X_1 \}$. We have the following structural matrices:
\[\begin{split} & Y = \left[ \begin{array}{cccc} 1 & 0 & 0 & 1 \\ 1 & 2 & 1 & 0 \end{array} \right], \; \; \; I_a = \left[ \begin{array}{ccc} -1 & 1 &0 \\ 1 & -1 & 0 \\ 0 & 0 & -1 \\ 0 & 0 & 1 \end{array}\right], \; \; \; \Gamma = Y \cdot I_a = \left[ \begin{array}{ccc} -1 & 1 & -1 \\ 1 & -1 & 1 \end{array} \right] \\& I_s = \left[ \begin{array}{cccc} 1 & 0 & 0 & 0\\ 0 & 1 & 0 & 0 \\ 0 & 0 & 1 & 0 \end{array} \right], \; \; \; A = I_a \cdot I_s = \left[ \begin{array}{cccc} -1 & 1 & 0 & 0 \\ 1 & -1 & 0 & 0 \\ 0 & 0 & -1 & 0 \\ 0 & 0 & 1 & 0 \end{array} \right]\end{split}\]
\noindent The stoichiometric subspace is given by $S = \mbox{span} \{ (1,-1) \}$ and the CRN is conservative with respect to the vector $\mathbf{c} = (1,1)$. This represents the observation that $X_1 + X_2 $ is constant.
\end{example}

\subsection{Chemical Reaction Networks with Discrete State Spaces}
\label{stochasticsection}

The evolution of a CRN on the discrete state space $\mathbb{Z}_{\geq 0}^m$ is given by
\begin{equation}
\label{markov}
\mathbf{X}(t) = \mathbf{X}(0) + \Gamma \; \mathbf{N}(t)
\end{equation}
where $\mathbf{X}(t) = (\mathbf{X}_1(t), \ldots, \mathbf{X}_m(t)) \in \mathbb{Z}_{\geq 0}^m$ is the \emph{discrete state} of the system at time $t$, and $\mathbf{N}(t) = (N_1(t),\ldots,N_r(t))$ and $N_{k}(t) \in \mathbb{Z}_{\geq 0}$ is the number of times the $k$th reaction has occurred up to time $t$. Several frameworks exist for precisely modeling the stochastic evolution of the discrete state $\mathbf{X}(t)$ over time, including the theories of continuous-time Markov chains (CTMC) \cite{Lawler,AndKurtz2015} and stochastic Petri Nets \cite{Bause-Kritzinger,PetriNet/review/Pet1977}. We will not be interested in these details; rather, we will be interested in where in the discrete state space $\mathbb{Z}_{\geq 0}^m$ the trajectory $\mathbf{X}(t)$ may travel. For a similar treatment, see \cite{P-C-K}.

We adopt the following definitions from \cite{J-A-C-B}.
\begin{definition}
\label{recurrence}
Consider a CRN on a discrete state space. Then:
\begin{enumerate}
\item
A complex $y_i \in \mathcal{C}$ is \textbf{charged} at state $\mathbf{X} \in \mathbb{Z}_{\ge 0}$ if $\mathbf{X}_j \geq y_{ij}$ for all $j\in \{1,\dots,m\}$.
\item
A reaction $y_i \to y_j \in \mathcal{R}$ is \textbf{charged} at state $\mathbf{X} \in \mathbb{Z}_{\ge 0}$ if $y_i$ is charged at $\mathbf{X}$.
\item
A state $\mathbf{X} \in \mathbb{Z}_{\geq 0}^m$ \textbf{reacts to} a state $\mathbf{Y} \in \mathbb{Z}_{\geq 0}^m$ (denoted $\mathbf{X} \to \mathbf{Y}$) if there is a reaction $y_i \to y_j \in \mathcal{R}$ such that $\mathbf{Y} = \mathbf{X} + y_j - y_i$ and $y_i$ is charged at state $\mathbf{X}$.
\item
A state $\mathbf{Y} \in \mathbb{Z}_{\geq 0}^m$ is \textbf{reachable} from a state $\mathbf{X} \in \mathbb{Z}_{\geq 0}^m$ (denoted $\mathbf{X} \leadsto \mathbf{Y})$ if there exists a sequence of states such that $\mathbf{X} = \mathbf{X}_{\nu(1)} \to \mathbf{X}_{\nu(2)} \to \cdots \to \mathbf{X}_{\nu(l)} = \mathbf{Y}$.
\item
A state $\mathbf{X} \in \mathbb{Z}_{\geq 0}^m$ is \textbf{recurrent} if, for any $\mathbf{Y} \in \mathbb{Z}_{\geq 0}^m$, $\mathbf{X} \leadsto \mathbf{Y}$ implies $\mathbf{Y} \leadsto \mathbf{X}$; otherwise, the state is \textbf{transient}.
\item
A complex $y_i \in \mathcal{C}$ is \textbf{recurrent} from state $\mathbf{X} \in \mathbb{Z}_{\geq 0}^m$ if $\mathbf{X} \leadsto \mathbf{Y}$ %for some $\mathbf{Y} \in \mathbb{Z}_{\geq 0}^m$
implies that there is a $\mathbf{Z}$ for which $\mathbf{Y} \leadsto \mathbf{Z}$
%for some $\mathbf{Z} \in \mathbb{Z}_{\geq 0}^m$
and $y_i$ is charged at $\mathbf{Z}$; otherwise, $y_i$ is \textbf{transient} from $\mathbf{X}$.% We will say that a strong linkage class $W_i \in \mathcal{W}$ is \textbf{recurrent} for initial state $\mathbf{X}$ if every complex $y_i \in W_i$ is recurrent.
\item
A reaction $y_i \to y_j \in \mathcal{R}$ is \textbf{recurrent} from state $\mathbf{X} \in \mathbb{Z}_{\geq 0}^m$ if $y_i$ is recurrent from $\mathbf{X}$; otherwise, $y_i \to y_j \in \mathcal{R}$ is \textbf{transient} from $\mathbf{X}$
\item
The CRN exhibits an \textbf{extinction event on $\mathcal{Y} \subseteq \mathcal{C}$ from $\mathbf{X}\in \mathbb{Z}^m_{\ge 0}$} if every complex $y_i \in \mathcal{Y}$ is transient from $\mathbf{X}$.
%\item
%\textbf{universal extinction} from $\mathbf{X}\in \mathbb{Z}^m_{\ge 0}$ if every reaction is transient from $\mathbf{X}$.
\item
The CRN exhibits a \textbf{guaranteed extinction event on $\mathcal{Y} \subseteq \mathcal{C}$} if it has an extinction event on $\mathcal{Y}$ from every $\mathbf{X}\in \mathbb{Z}^m_{\ge 0}$.
%\item
%a \textbf{universal extinction event from $\mathbf{X} \in \mathbb{Z}^m_{\ge 0}$} if every source complex is transient from $\mathbf{X}$.
%\item
%a \textbf{guaranteed universal extinction event} if every source complex is transient from every $\mathbf{X} \in \mathbb{Z}_{\ge 0}^m$.
%\item
%\textbf{universal guaranteed extinction} if every reaction is transient from every $\mathbf{X}\in \mathbb{Z}^m_{\ge 0}$.
\end{enumerate}
\end{definition}

\noindent Since subconservative CRNs on discrete state spaces have a finite number of states (see Theorem 1, \cite{DBLP:conf/ac/MemmiR75}), the notion of recurrence presented above corresponds to the notion of positive recurrence from the theory of stochastic process \cite{Lawler,AndKurtz2015}.

\begin{example}
\label{example2}
Reconsider the CRN introduced in Example \ref{example1} and the initial state $\mathbf{X} = \{ \mathbf{X}_1 = 0, \mathbf{X}_2 = 3 \}$. We have that the complexes $X_2$ and $2X_2$ are charged at $\mathbf{X}$ but the complex $X_1 + X_2$ is not charged at $\mathbf{X}$. The state $\mathbf{X}$ reacts to $\mathbf{X}' = \{ \mathbf{X}_1 = 1, \mathbf{X}_2 = 2 \}$ through either the reaction $2X_2 \to X_1 + X_2$ or $X_2 \to X_1$ (i.e. $\mathbf{X} \to \mathbf{X}'$). We can furthermore see that the states $\mathbf{X}'' = \{ \mathbf{X}_1 = 2, \mathbf{X}_2 = 1 \}$ and $\mathbf{X}''' = \{ \mathbf{X}_1 = 3, \mathbf{X}_2 = 0 \}$ are reachable from $\mathbf{X}$ since $\mathbf{X} \to \mathbf{X}' \to \mathbf{X}'' \to \mathbf{X}'''$ (i.e. $\mathbf{X} \leadsto \mathbf{X}'''$). Since no reactions are charged at $\mathbf{X}'''$, however, we may not return to any of $\mathbf{X}$, $\mathbf{X}'$, or $\mathbf{X}''$. These states are therefore transient, as are all of the source complexes and reactions. It follows that the CRN has an extinction event on $\mathcal{Y} = \{ X_1 + X_2, 2X_2, X_2 \}$. In fact, repeated application of $X_2 \to X_1$ leads to an extinction event regardless of initial state, so that the CRN has a guaranteed extinction event on $\mathcal{Y}$.
\end{example}

\subsection{Domination-Expanded Reaction Networks}

We briefly restate the key extensions to CRNs made in \cite{J-A-C-B}. We start with the following, which was adapted from \cite{A-E-J} and \cite{Brijder}.

\begin{definition}
\label{complexdomination}
Consider two complexes of a CRN, $y_i, y_j \in \mathcal{C}$, $y_i \not= y_j$. We say that $y_i$ \textbf{dominates} $y_j$ if $y_j \leq y_i$. We define the \textbf{domination set} of a CRN to be
\begin{equation}
\label{dominationset}
\mathcal{D}^* = \left\{ (y_i,y_j) \in \mathcal{C} \times \mathcal{C} \; | \; y_j \leq y_i, \; y_i \not= y_j \right\}.
\end{equation}
\end{definition}

\noindent The motivation underlying the domination relations $(y_i,y_j) \in \mathcal{D}$ is that, if there is sufficient molecularity in the discrete state space CRN for a reaction from $y_i$ to occur from state $\mathbf{X} \in \mathbb{Z}_{\geq 0}^m$, then there is necessarily sufficient molecularity for a reaction from $y_j$ to occur from $\mathbf{X}$. We use the domination relations to construct the following.

\begin{definition}
\label{dominationnetwork}
We say that $(\mathcal{S},\mathcal{C},\mathcal{R} \cup \mathcal{D})$ is a \textbf{domination-expanded reaction network} (dom-CRN) of the CRN $(\mathcal{S},\mathcal{C},\mathcal{R})$ if $\mathcal{D} \subseteq \mathcal{D}^*$. Furthermore, we say a dom-CRN is \textbf{$\mathcal{Y}$-admissible} if, given an absorbing complex set $\mathcal{Y} \subseteq \mathcal{C}$ on the dom-CRN, we have: (i) $\mathcal{R} \cap \mathcal{D} = \O$, and (ii) $(y_i, y_j) \in \mathcal{D}$ implies $y_j \not\in \mathcal{Y}$.
\end{definition}

\noindent In the context of dom-CRNs, we will also represent the domination relations $(y_i,y_j)$ as $y_i \to y_j$. Note that reaction goes from the dominating complex to the dominated complex, i.e. $y_j \leq y_i$ implies $y_i \to y_j$ is added to the CRN.% Also note that if a reaction with source complex $y_i$ may occur from a state $\mathbf{X} \in \mathbb{Z}_{\geq 0}^m$, then a reaction with source complex $y_j$ may occur from $\mathbf{X}$.

On a dom-CRN, we are interested in the following subgraphs.

\begin{definition}
\label{forest1}
Consider a CRN $(\mathcal{S},\mathcal{C},\mathcal{R})$ and a $\mathcal{Y}$-admissible dom-CRN $(\mathcal{S},\mathcal{C},\mathcal{R} \cup \mathcal{D})$ where $\mathcal{Y} \subseteq \mathcal{C}$ is an absorbing complex set on the dom-CRN. Then the network $(\mathcal{S},\mathcal{C},\mathcal{R}_F \cup \mathcal{D}_F)$ where $\mathcal{R}_F \subseteq \mathcal{R}$ and $\mathcal{D}_F \subseteq \mathcal{D}$ is called an \textbf{$\mathcal{Y}$-exterior forest} if, for every complex $y_i \not\in \mathcal{Y}$, there is a unique path in $\mathcal{R}_F \cup \mathcal{D}_F$ from $y_i$ to $\mathcal{Y}$.
\end{definition}

\noindent $\mathcal{Y}$-external forests define a flow from the distal portion of the reaction graph to the absorbing complex set $\mathcal{Y}$ in the dom-CRN. By convention, $\mathcal{Y}$-exterior forests will include all $\mathcal{Y}$-interior reactions of the dom-CRN.

We will be interested in the following property on $\mathcal{Y}$-exterior forests.

\begin{definition}
\label{balancing}
Consider a CRN $(\mathcal{S},\mathcal{C},\mathcal{R})$ and a $\mathcal{Y}$-admissible dom-CRN $(\mathcal{S},\mathcal{C},\mathcal{R} \cup \mathcal{D})$ where $\mathcal{Y} \subseteq \mathcal{C}$ is an absorbing complex set on the dom-CRN. Let $d = |\mathcal{D}|$. Then a $\mathcal{Y}$-exterior forest $(\mathcal{S},\mathcal{C},\mathcal{R}_F \cup \mathcal{D}_F)$ is said to be \textbf{balanced} if there is a vector $\alpha = (\alpha_R,\alpha_D) \in \mathbb{Z}_{\geq 0}^{r+d}$ with $\alpha_k>0$ for at least one $\mathcal{Y}$-exterior reaction which satisfies:
\begin{enumerate}
\item
supp$(\alpha_R) \subseteq$ supp$(\mathcal{R}_F)$ and supp$(\alpha_D) \subseteq$ supp$(\mathcal{D}_F)$;
\item
$\alpha_R \in \ker(\Gamma)$; and
\item
for every $R_k \in \mathcal{R}_F \cup \mathcal{D}_F$ where $y_{\rho(k)} \not\in \mathcal{Y}$, we have $\displaystyle{\alpha_k \geq \mathop{\sum_{i =1}^{r+d}}_{y_{\rho'(i)} = y_{\rho(k)}} \alpha_i}$.
\end{enumerate}
Otherwise, the $\mathcal{Y}$-exterior forest is said to be \textbf{unbalanced}.
\end{definition}

\noindent Condition $3$ of Definition \ref{balancing} may be interpreted as stating that, for $\mathcal{Y}$-exterior complexes, the weight of the incoming edges may not be greater than the outgoing edge. Notice that Conditions $1-3$ generated a system of equalities and inequalities on the edges of the dom-CRN. The computational implementation of the conditions of Definition \ref{balancing} is the primary focus of this paper.

\begin{remark}
For simplicity, if $\mathcal{Y}$ consists only of terminal complexes of the dom-CRN, we will refer simply to \emph{admissible dom-CRNs} and \emph{external forests} with the understanding that $\mathcal{Y}$ consists only of terminal complexes.
\end{remark}

\begin{example}
\label{example3}
Reconsider the CRN in Examples \ref{example1} and \ref{example2}. We label $y_1 = X_1 + X_2$, $y_2 = 2X_2$, $y_3 = X_2$, and $y_4 = X_1$. We have the domination relations $y_3 \leq y_1$, $y_3 \leq y_2$, and $y_4 \leq y_1$ so that $\mathcal{D}^* = \{ (y_1,y_3), (y_2,y_3), (y_1,y_4) \}$. The maximal dom-CRN is
\begin{center}
\begin{tikzpicture}[auto, outer sep=3pt, node distance=2cm,>=latex']
\node (C1) {$X_1+X_2$};
\node [right of = C1, node distance = 2.5cm] (C2) {$2X_2$};
\node [below of = C2, node distance = 1.5cm] (C3) {$X_2$};
\node [left of = C3, node distance = 2.5cm] (C4) {$X_1$};
\path[->, bend left = 10] (C1) edge node [above left = -0.15cm] {\tiny $1$} (C2);
\path[->, bend left = 10] (C2) edge node [below right = -0.15cm] {\tiny $2$} (C1);
\path[->] (C3) edge node [above = -0.15cm] {\tiny $3$} (C4);
\path[dashed,->] (C1) edge [right] node {\tiny $D_1$} (C3);
\path[dashed,->] (C1) edge [left] node {\tiny $D_3$} (C4);
\path[dashed,->] (C2) edge [right] node {\tiny $D_2$} (C3);
\end{tikzpicture}
\end{center}
This dom-CRN is not admissible since the domination reaction $X_1 + X_2 \stackrel{D_3}{\longrightarrow} X_1$ leads to the terminal complex $X_1$ in the dom-CRN. Consider instead the folowing submaximal dom-CRN, where $\mathcal{D} = \{ (y_1,y_3), (y_2,y_3) \}$:
\begin{center}
\begin{tikzpicture}[auto, outer sep=3pt, node distance=2cm,>=latex']
\node (C1) {$X_1+X_2$};
\node [right of = C1, node distance = 2.5cm] (C2) {$2X_2$};
\node [below of = C2, node distance = 1.5cm] (C3) {$X_2$};
\node [left of = C3, node distance = 2.5cm] (C4) {$X_1$};
\path[->, bend left = 10] (C1) edge node [above left = -0.15cm] {\tiny $1$} (C2);
\path[->, bend left = 10] (C2) edge node [below right = -0.15cm] {\tiny $2$} (C1);
\path[->] (C3) edge node [above = -0.15cm] {\tiny $3$} (C4);
\path[dashed,->] (C1) edge [right] node {\tiny $D_1$} (C3);
%\path[dashed,->] (C1) edge [left] node {$D_1$} (C4);
\path[dashed,->] (C2) edge [right] node {\tiny $D_2$} (C3);
\end{tikzpicture}
\end{center}
Since this dom-CRN does not contain any domination reactions which lead directly to the terminal complex $X_1$, it is admissible. There are several options for external forests on this dom-CRN, including those indicated as follows in bold red:
\begin{center}
\begin{tikzpicture}[auto, outer sep=3pt, node distance=2cm,>=latex']
%\node (C1) {$X_1+X_2$};
%\node [right of = C1, node distance = 2.5cm] (C2) {$2X_2$};
%\node [below of = C2, node distance = 1.5cm] (C3) {$X_2$};
%\node [left of = C3, node distance = 2.5cm] (C4) {$X_1$};
\node (C11) {$X_1+X_2$};
\node [right of = C11, node distance = 2.5cm] (C21) {$2X_2$};
\node [below of = C21, node distance = 1.5cm] (C31) {$X_2$};
\node [left of = C31, node distance = 2.5cm,ellipse,fill=blue!10] (C41) {$X_1$};
\node [right of = C21, node distance = 2.5cm] (C12) {$X_1+X_2$};
\node [right of = C12, node distance = 2.5cm] (C22) {$2X_2$};
\node [below of = C22, node distance = 1.5cm] (C32) {$X_2$};
\node [left of = C32, node distance = 2.5cm,ellipse,fill=blue!10] (C42) {$X_1$};
%\path[->, bend left = 10] (C1) edge node [above left = -0.15cm] {\tiny $1$} (C2);
%\path[red,->, bend left = 10] (C2) edge node [below right = -0.15cm] {\tiny $2$} (C1);
%\path[red,->] (C3) edge node [above = -0.15cm] {\tiny $3$} (C4);
%\path[dashed,->] (C1) edge [right] node {$D_2$} (C3);
%\path[red,dashed,->] (C1) edge [left] node {$D_1$} (C4);
%\path[dashed,->] (C2) edge [right] node {$D_3$} (C3);
%\path[red,dashed,->] (C21) edge [right] node {$D_3$} (C31);
\path[red,line width=0.75mm,->, bend left = 10] (C11) edge node [above left = -0.15cm] {\tiny $\mathbf{1}$} (C21);
\path[->, bend left = 10] (C21) edge node [below right = -0.15cm] {\tiny $2$} (C11);
\path[red,line width=0.75mm,->] (C31) edge node [above = -0.15cm] {\tiny $\mathbf{3}$} (C41);
\path[dashed,->] (C11) edge [right] node {\tiny $D_1$} (C31);
%\path[dashed,->] (C11) edge [left] node {$D_1$} (C41);
\path[red,line width=0.75mm,dashed,->] (C21) edge [right] node {\tiny $\mathbf{D_2}$} (C31);
\path[->, bend left = 10] (C12) edge node [above left = -0.15cm] {\tiny $1$} (C22);
\path[red,line width=0.75mm,->, bend left = 10] (C22) edge node [below right = -0.15cm] {\tiny $\mathbf{2}$} (C12);
\path[red,line width=0.75mm,->] (C32) edge node [above = -0.15cm] {\tiny $\mathbf{3}$} (C42);
\path[red,line width=0.75mm,dashed,->] (C12) edge [right] node {\tiny $\mathbf{D_1}$} (C32);
%\path[dashed,->] (C12) edge [left] node {$D_1$} (C42);
\path[dashed,->] (C22) edge [right] node {\tiny $D_2$} (C32);
\end{tikzpicture}
\end{center}
Note that, within the external forests above (bold red), there is a unique path from every complex to the terminal complex $X_1$ (shaded blue). In order to be balanced, we need to find a vector $\alpha = ((\alpha_R)_1,(\alpha_R)_2, (\alpha_R)_3, (\alpha_D)_1, (\alpha_D)_2) \in \mathbb{Z}_{\geq 0}^{5}$, $\alpha \not= \mathbf{0}$, satisying the conditions of Definition \ref{balancing}. For the external forest on the left, we have the following system of equalities and inequalities:
\begin{equation}
\label{inequalities1}
\left\{ \; \; \begin{split} (\mbox{Cond. } 1): \; \; & \hspace{0.17in} (\alpha_R)_2 = 0, \; (\alpha_D)_1 = 0 \\ (\mbox{Cond. } 2): \; \; & -(\alpha_R)_1 + (\alpha_R)_2 + (\alpha_R)_3 = 0 \\ & \hspace{0.17in} (\alpha_R)_1 - (\alpha_R)_2 - (\alpha_R)_3 = 0 \\ (\mbox{Cond. } 3): \; \; & \hspace{0.17in} (\alpha_R)_3 \geq (\alpha_D)_2 \geq (\alpha_R)_1 \geq 0. \end{split} \right.
\end{equation}
which can be satisfied by the vector $(1,0,1,0,1)$. For the external forest on the right, we have the system
\begin{equation}
\label{inequalities2}
\left\{ \; \; \begin{split} (\mbox{Cond. } 1): \; \; & \hspace{0.17in} (\alpha_R)_1 = 0, \; (\alpha_D)_2 = 0 \\ (\mbox{Cond. } 2): \; \; & -(\alpha_R)_1 + (\alpha_R)_2 + (\alpha_R)_3 = 0 \\ & \hspace{0.17in} (\alpha_R)_1 - (\alpha_R)_2 - (\alpha_R)_3 = 0 \\ (\mbox{Cond. } 3): \; \; & \hspace{0.17in} (\alpha_R)_3 \geq (\alpha_D)_1 \geq (\alpha_R)_2 \geq 0. \end{split} \right.
\end{equation}
which has no nontrivial solution. It follows that the external forest on the left is balanced while the one on the right is unbalanced. The system (\ref{inequalities2}) corresponds to the one in the introduction.
\end{example}

\subsection{Conditions for Extinction Events}

In \cite{J-A-C-B}, the authors present and prove the following main results regarding extinction events in CRNs with discrete state spaces.

\begin{theorem}
\label{mainresult}
Consider a subconservative CRN and a $\mathcal{Y}$-admissible dom-CRN where $\mathcal{Y} \subseteq \mathcal{C}$ is an absorbing complex set on the dom-CRN.  Suppose that there is a complex $y_i \not\in \mathcal{Y}$ of the dom-CRN which is recurrent from a state $\mathbf{X} \in \mathbb{Z}_{ \geq 0}^m$ in the discrete state space CRN. Then every $\mathcal{Y}$-exterior forest of the dom-CRN is balanced.
\end{theorem}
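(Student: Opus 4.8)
We must exhibit, for an arbitrary $\mathcal{Y}$-exterior forest $(\mathcal{S},\mathcal{C},\mathcal{R}_F\cup\mathcal{D}_F)$, a weight vector $\alpha=(\alpha_R,\alpha_D)\in\mathbb{Z}_{\ge 0}^{r+d}$ meeting Conditions $1$--$3$ of Definition \ref{balancing}; the plan is to manufacture such an $\alpha$ from a recurrent trajectory of the discrete system. Since the CRN is subconservative, the set $\Omega$ of states reachable from $\mathbf{X}$ is finite (Theorem 1 of \cite{DBLP:conf/ac/MemmiR75}), so $\Omega$ contains a state $\mathbf{Z}^\ast$ that is recurrent in the sense of Definition \ref{recurrence}(5) and satisfies $\mathbf{X}\leadsto\mathbf{Z}^\ast$. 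Recurrence of $y_i$ from $\mathbf{X}$ passes to $\mathbf{Z}^\ast$ by transitivity of $\leadsto$, and applying it with the target state $\mathbf{Z}^\ast$ itself (and using that the communicating class $C$ of $\mathbf{Z}^\ast$ is closed) we may also assume $y_i$ is charged at $\mathbf{Z}^\ast$. Firing the reaction $k_0\colon y_i\to y_j$ at $\mathbf{Z}^\ast$ and then using recurrence of $\mathbf{Z}^\ast$ to return, we obtain a closed reaction walk at $\mathbf{Z}^\ast$ that fires $k_0$ at least once; its reaction-count vector $\mathbf{N}\in\mathbb{Z}_{\ge 0}^r$ satisfies $\Gamma\mathbf{N}=\mathbf{0}$ (the walk returns to $\mathbf{Z}^\ast$) and $N_{k_0}\ge 1$, so $\mathbf{N}\in\ker(\Gamma)$ with $y_{\rho(k_0)}=y_i\notin\mathcal{Y}$.

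Next I would record the combinatorial consequences of domination. Call a reaction \emph{active} if it is charged at some state of $C$, and a complex \emph{recurrently charged} if it is charged at some state of $C$. Then: (i) if $y_a$ is recurrently charged and $y_c\le y_a$ (a domination relation), then $y_c$ is recurrently charged, since it is charged at the same state; (ii) if $y_a\to y_b$ is active, then $y_b$ is recurrently charged, since firing it from a state $\mathbf{S}\ge y_a$ in $C$ lands in $C$ at a state $\ge y_b$; and (iii) a \emph{reaction} edge of $\mathcal{R}_F$ whose source is recurrently charged is active, hence (by (ii)) has a recurrently charged head. Now walk from $y_i$ along its unique forest path to $\mathcal{Y}$: its intermediate vertices are $\mathcal{Y}$-exterior, and its final edge must be a reaction into $\mathcal{Y}$, since $\mathcal{Y}$-admissibility forbids domination edges entering $\mathcal{Y}$. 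By (i)--(iii), every complex on this path is recurrently charged and every reaction on it is active; in particular the path contains at least one active $\mathcal{Y}$-exterior reaction.

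The remaining, and main, step is to assemble $\alpha$ by rerouting the recurrent flux $\mathbf{N}$ onto the forest: reactions of $\mathrm{supp}(\mathbf{N})$ lying in $\mathcal{R}_F$ contribute (a common multiple of) their weight to $\alpha_R$, each reaction $y_a\to y_b$ of $\mathrm{supp}(\mathbf{N})\setminus\mathcal{R}_F$ is re-expressed using transport along the forest paths out of $y_a$ and $y_b$ (which, by Step 2, run through recurrently charged complexes), and the weights on \emph{domination} edges are then fixed—processing the $\mathcal{Y}$-exterior complexes in a topological order of the forest oriented toward $\mathcal{Y}$—by setting each such weight equal to the inflow accumulated at its tail, so that Condition $3$ holds at every domination edge by construction. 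Positivity of $\alpha$ on some $\mathcal{Y}$-exterior reaction is then supplied by the active exterior reaction found on the forest path through $y_i$. The delicate point, and where I expect the real work to lie, is that two obligations are coupled: one must choose the rerouting so that (a) $\alpha_R$ still lies in $\ker(\Gamma)$, and (b) at each forest \emph{reaction} edge the (already fixed) outgoing weight dominates all inflow accumulated upstream. Both push toward scaling $\mathbf{N}$ up by a large factor and distributing the rerouted flux carefully along the forest, i.e. toward a flow-augmentation or induction-along-the-forest argument; an appealing alternative is to phrase ``balanced'' as a feasibility linear program in $\alpha$ and derive a contradiction from a Farkas infeasibility certificate built from $\mathbf{N}$ and the facts of Step 2. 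Reconciling (a) and (b) is the main obstacle.
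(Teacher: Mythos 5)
First, a point of orientation: this paper does not actually prove Theorem \ref{mainresult}. The statement is imported from the companion paper \cite{J-A-C-B} (``In \cite{J-A-C-B}, the authors present and prove the following main results\ldots''), and the only proofs supplied here are of the auxiliary Theorems \ref{foresttheorem} and \ref{tieringtheorem}. So your argument has to stand on its own, and as written it does not: it is a plan whose central step is left unexecuted, as you yourself acknowledge (``where I expect the real work to lie,'' ``the main obstacle'').

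Your first two steps are sound preparation: subconservativity gives a finite reachable set, hence a closed communicating class $C$ in which $y_i$ is charged; any closed walk in $C$ has count vector $\mathbf{N}\in\ker(\Gamma)$, $\mathbf{N}\ge\mathbf{0}$; and the set of complexes charged somewhere in $C$ is forward-closed under both true reactions and domination edges, so the forest path from $y_i$ carries an active $\mathcal{Y}$-exterior reaction. But the theorem \emph{is} the third step --- producing one vector $\alpha$ satisfying Conditions $1$--$3$ simultaneously --- and the rerouting you sketch has a concrete obstruction you do not resolve. Condition $2$ is a constraint in species space ($\Gamma\alpha_R=\mathbf{0}$, via the reaction vectors $y_{\rho'(k)}-y_{\rho(k)}$), whereas Condition $3$ is, by Theorem \ref{tieringtheorem}, a constraint in complex space ($[I_a\alpha]_j\le 0$ for exterior $j$). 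Keeping $\mathbf{N}$ intact preserves Condition $2$ but violates Condition $1$, since $\mathrm{supp}(\mathbf{N})$ need not lie in $\mathrm{supp}(\mathcal{R}_F)$; while ``re-expressing'' a non-forest reaction $y_a\to y_b$ by transporting its weight along the forest edges leaving $y_a$ and $y_b$ changes its image under $\Gamma$ (different reactions have different reaction vectors, and the forest edge out of $y_a$ may be a domination edge, which contributes nothing to $\alpha_R$), so the rerouted vector generically leaves $\ker(\Gamma)$. No amount of ``scaling $\mathbf{N}$ up by a large factor'' reconciles these, because the two conditions are not related by a scalar. You would need either a genuinely different choice of closed walk (one whose reaction support is compatible with the forest) or an explicit Farkas-type certificate combining the $\Gamma$- and $I_a$-systems; neither is constructed, so the proof is incomplete at exactly the point where the content of the theorem lies.
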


\begin{corollary}
\label{maincorollary}
Consider a subconservative CRN and a $\mathcal{Y}$-admissible dom-CRN where $\mathcal{Y} \subseteq \mathcal{C}$ is an absorbing complex set on the dom-CRN. Suppose there is a $\mathcal{Y}$-exterior forest of the dom-CRN which is unbalanced. Then the discrete state space CRN has a guaranteed extinction event on $\mathcal{Y}^c$.
\end{corollary}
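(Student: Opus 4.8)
The plan is to derive Corollary \ref{maincorollary} directly from Theorem \ref{mainresult} by contraposition. Theorem \ref{mainresult} asserts that if some $\mathcal{Y}$-exterior complex is recurrent from some state $\mathbf{X}$, then \emph{every} $\mathcal{Y}$-exterior forest is balanced. The hypothesis of the corollary gives us one $\mathcal{Y}$-exterior forest that is unbalanced, so the contrapositive of Theorem \ref{mainresult} tells us immediately that \emph{no} $\mathcal{Y}$-exterior complex is recurrent from any state $\mathbf{X} \in \mathbb{Z}_{\geq 0}^m$; that is, every complex $y_i \not\in \mathcal{Y}$ is transient from every $\mathbf{X}$.

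The remaining work is to translate ``every $\mathcal{Y}$-exterior complex is transient from every state'' into ``the CRN has a guaranteed extinction event on $\mathcal{Y}^c$'' in the sense of Definition \ref{recurrence}(8)--(9). First I would unwind the definitions: by Definition \ref{recurrence}(9), a guaranteed extinction event on $\mathcal{Y}^c$ means there is an extinction event on $\mathcal{Y}^c$ from every $\mathbf{X} \in \mathbb{Z}_{\geq 0}^m$, and by Definition \ref{recurrence}(8), an extinction event on $\mathcal{Y}^c$ from $\mathbf{X}$ means every complex $y_i \in \mathcal{Y}^c$ is transient from $\mathbf{X}$. Since $\mathcal{Y}^c = \{ y_i \in \mathcal{C} \mid y_i \not\in \mathcal{Y} \}$ is precisely the set of $\mathcal{Y}$-exterior complexes, and we have just established that each such complex is transient from every state, both definitional requirements are met. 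One small point to check is that the domination reactions $\mathcal{D}$ present in the dom-CRN do not affect the notions of ``charged,'' ``reacts to,'' ``reachable,'' ``recurrent,'' and ``transient'' from Definition \ref{recurrence}: those are defined purely in terms of the original reaction set $\mathcal{R}$ on the discrete state space, so transience of $y_i$ in the sense of Definition \ref{recurrence}(6)--(7) is a statement about the underlying CRN, which is exactly what Definition \ref{recurrence}(8)--(9) requires.

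Accordingly, the argument is essentially a bookkeeping exercise once Theorem \ref{mainresult} is in hand: apply the contrapositive to conclude that no $\mathcal{Y}$-exterior complex can be recurrent from any state, then invoke Definition \ref{recurrence}(8)--(9) to repackage this as a guaranteed extinction event on $\mathcal{Y}^c$. I do not anticipate a genuine obstacle here; if anything, the only subtlety worth a sentence is making explicit that ``from every $\mathbf{X}$'' in the conclusion follows because Theorem \ref{mainresult}'s hypothesis quantifies existentially over $\mathbf{X}$ (``recurrent from a state $\mathbf{X}$''), so ruling out recurrence rules it out for all states simultaneously.
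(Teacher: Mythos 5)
Your argument is correct and matches the paper's own treatment: the paper explicitly describes Corollary \ref{maincorollary} as the contrapositive of Theorem \ref{mainresult}, and your unpacking of Definition \ref{recurrence}(8)--(9) (together with the observation that transience is a property of the underlying CRN, not the dom-CRN) is exactly the bookkeeping needed to make that identification precise.
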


\noindent When algorithmically establishing that a CRN on a discrete state space has an extinction event, we will use Corollary \ref{maincorollary} (the contrapositive of Theorem \ref{mainresult}). Note that it is sufficient to find a single $\mathcal{Y}$-exterior forest which is unbalanced, even if the dom-CRN has other balanced $\mathcal{Y}$-exterior forests.

\begin{example}
Reconsider the CRN from the introduction, which was repeated in Examples \ref{example1}, \ref{example2}, and \ref{example3}. The CRN is conservative (and therefore subconservative) and in Example \ref{example3} we established that it has an unbalanced exterior forest on an admissible dom-CRN. It follows by Corollary \ref{maincorollary} that the CRN has a guaranteed extinction event. This confirms what we observed in Example \ref{example2}.
\end{example}

\section{Computational Implementation}
\label{computationalsection}

In this section, we outline a computational algorithm capable of affirming whether a given CRN exhibits an extinction event according to Corollary \ref{maincorollary}. The program is written in Python and utilizes mixed-integer linear program (MILP) modules. MILP algorithms have been used increasingly within CRNT in recent years to verify a variety of structural properties of CRNs \cite{J-P-D,Gabor2015,Rudan2014,J-S6,Sz2,J-S4,J4}.

For brevity, we have placed technical discussion of the algorithm in the Appendix. In Appendix \ref{appendixa}, we present the background theoretical results required for the program to check the technical conditions of Corollary \ref{maincorollary}. In Appendix \ref{appendixb}, we present details on how the algorithm cycles through and expands the absorbing complex set $\mathcal{Y}$. In Appendix \ref{appendixc}, we present a detailed description of the modules outlined in Section \ref{algorithmsection}. In Appendix \ref{appendixd}, we summarize the output of our BioModels Database run.

% In order to construct the required systems of equalities and inequalities given by Definition \ref{balancing} in on the support of a $\mathcal{Y}$-exterior forest by Definition \ref{forest1}, we introduce the technical results Theorems \ref{foresttheorem}, \ref{tieringtheorem}, and \ref{kerneltheorem}.

\subsection{Description of Program}
\label{algorithmsection}

\noindent The pseudocode given in Algorithm \ref{alg:implementation} implements Corollary \ref{maincorollary}. The modules are briefly described below. More detailed explanations are given in Appendix \ref{appendixc}.

\begin{algorithm}
\begin{algorithmic}[1]
\Procedure{DiscreteExtinction}{$N$}
\State CreateModel($N$)
\If{IsSubconservative($N$)}
\State DiscreteExtinction $\gets$ \texttt{false}
\State $\mathcal{D}^*$ = DominationSet($N$)
\State $\mathcal{Y}^*$ = $\O$
\State $(\mathcal{Y},\mathcal{D})$ = FindAdmissibleDom($N$,$\mathcal{Y}^*$,$\mathcal{D}^*$)
\While{DiscreteExtinction $=$ \texttt{false} \textbf{and} $\mathcal{Y} \not= \mathcal{C}$}
\State dom-$N$ = DominationExpandedNetwork($N$,$\mathcal{D}$)
\ForAll{$F$ in CycleForests(dom-$N$)}
\If{IsExtForest($F$)}
\If{IsBalanced($F$)}
\State $\mathcal{Y}^*$ $\gets$ ExpandedY($F$)
\State $(\mathcal{Y},\mathcal{D})$ = FindAdmissibleDom($N$,$\mathcal{Y}^*$,$\mathcal{D}^*$)
\Else
\State DiscreteExtinction $\gets$ \texttt{true}
\EndIf
\EndIf
\EndFor
\EndWhile
\EndIf
\State WriteOutput
\EndProcedure
\end{algorithmic}
\caption{Pseudocode implementation of Corollary \ref{maincorollary} for input CRN $N = (\mathcal{S},\mathcal{C},\mathcal{R})$.}
\label{alg:implementation}
\end{algorithm}

\begin{itemize}
\item[] \hspace{-0.4in} CreateModel($N$):\\This module takes in a CRN $N = (\mathcal{S},\mathcal{C},\mathcal{R})$ and generates the structural matrices $Y$, $I_a$, $\Gamma$, $I_s$, and $A$.
\item[] \hspace{-0.4in} IsSubconservative($N$):\\This module determines whether the CRN is subconservative. If the CRN is subconservative, it further runs the module IsConservative($N$) to determine whether it is also conservative.
\item[] \hspace{-0.4in} DominationSet($N$):\\This module generates the domination set $\mathcal{D}^*$ (Definition \ref{complexdomination}).
\item[] \hspace{-0.4in} FindTerm($N$):\\This module determines the terminal complexes of a given CRN (or dom-CRN).
\item[] \hspace{-0.4in} FindAdmissibleDom($N$,$\mathcal{Y}^*$,$\mathcal{D}^*$):\\This module determines the minimal absorbing complex set $\mathcal{Y} \supseteq \mathcal{Y}^*$ and maximal set $\mathcal{D} \subseteq \mathcal{D}^*$ such that the resulting dom-CRN is $\mathcal{Y}$-admissible (Definition \ref{dominationnetwork}). (See Appendix \ref{appendixb} for details.)
\item[] \hspace{-0.4in} DominationExpandedNetwork($N$,$\mathcal{D}$):\\This module generates the structural matrices $Y$, $I_a$, $\Gamma$, $I_s$, and $A$ for the admissible dom-CRN.
\item[] \hspace{-0.4in} CycleForests(dom-$N$):\\This module cycles over every combination $F$ of reactions in the dom-CRN such that each $\mathcal{Y}$-exterior complex is the source complex for exactly one reaction. By default, every $\mathcal{Y}$-interior reaction of the dom-CRN is included in every set $F$.
\item[] \hspace{-0.4in} IsExtForest($F$):\\This module checks whether the set of reactions $F$ is a $\mathcal{Y}$-exterior forest (Definition \ref{forest1}) according to Theorem \ref{foresttheorem} (Appendix \ref{appendixa}).
\item[] \hspace{-0.4in} IsBalanced($F$):\\This module checks whether $F$ is balanced (Definition \ref{balancing}) according to Theorem \ref{tieringtheorem} (Appendix \ref{appendixa}).
\item[] \hspace{-0.4in} ExpandedY($F$):\\If a given exterior forest $F$ is balanced but the balancing vector $\alpha \in \mathbb{R}^{r + d}_{\geq 0}$ does not have support on $\mathcal{R}_F \cup \mathcal{D}_F$, this module expands the absorbing complex set $\mathcal{Y}$. (See Appendix \ref{appendixb} for details.)
\item[] \hspace{-0.4in} WriteOutput:\\This module writes the output into a {\tt .dat} file.

\end{itemize}

%\section{Applications}

%In this section, we detail the application of the algorithm presented in Section \ref{algorithmsection} on 458 models from the BioModels database. We provide a deeper analysis of a few notable examples of models which the algorithm determined exhibit an extinction event.

\subsection{BioModels Database}

We ran Algorithm \ref{alg:implementation} on 458 curated models from the European Bioinformatics Institute's BioModels Database. In total, 86 models were found which had a guaranteed extinction event according to Corollary \ref{maincorollary}. Of these, 53 models were conservative and 33 were subconservative but not conservative. We limited our run to those models which contained $50$ or fewer reactions.

We further classify the models which exhibit an extinction event according to the following:
\begin{enumerate}
\item
A species $X_i$ is classified as \emph{source only} if, in the unbalanced $\mathcal{Y}$-exterior forest which guarantees extinction, $X_i$ appears only in source complexes.
\item
A species $X_i$ is classified as \emph{product only} if, in the unbalanced $\mathcal{Y}$-exterior forest which guarantees extinction, $X_i$ appears only in product complexes.
\end{enumerate}
These classifications identify mechanisms whose primary purpose is to convert one substrate (source) into another (product). In such mechanisms, we expect discrete extinction events to occur as a consequence of a source being used up and/or a product being formed. For example, consider the classical Michaelis-Menten mechanism \cite{M-M}:
\begin{center}
\begin{tikzpicture}[auto, outer sep=3pt, node distance=2cm,>=latex']
\node (C1) {$S+E$};
\node [right of = C1, node distance = 2.5cm] (C2) {$SE$};
\node [right of = C2, node distance = 2.5cm,ellipse, fill = blue!10] (C3) {$P+E$};
\path[red,line width=0.75mm,->, bend left = 10] (C1) edge node [above left = -0.15cm] {\tiny $\mathbf{1}$} (C2);
\path[->, bend left = 10] (C2) edge node [below right = -0.15cm] {\tiny $2$} (C1);
\path[red,line width=0.75mm,->] (C2) edge node [above = -0.15cm] {\tiny $\mathbf{3}$} (C3);
\end{tikzpicture}
\end{center}
where the unbalanced exterior forest is indicated in bold red, and terminal complex is shaded blue. The CRN is conservative, and has the source only species $S$ and product only species $P$. Notice that $S$ is source only even though it appears as a product in the original CRN since the reaction $SE \to S +E$ is not contained in the exterior forest (red). The remaining species $E$ and $SE$ appear as both a source and product in the exterior forest. For large CRNs, the property of having source only or product only species may be difficult to verify directly.

Of the 86 models classified as having a discrete extinction event from the BioModels run, 81 have source only species, 57 have product only species, and 52 have both source only and product only species. These results are consistent with the observation that many models in the database model processive biochemical mechanisms like the Michaelis-Menten mechanism. A full enumeration and classification of the BioModels networks identified by the algorithm as having a guaranteed extinction event is given in Appendix \ref{appendixd}.

The results of the BioModels run has suggested some interesting CRNs with extinction events and some unique pathways for obtaining extinction. It also suggests avenues for future work. We now further investigate two models which the program identified as having an extinction event according to Corollary \ref{maincorollary}: a model of polyamine metabolism in mammals ({\tt biomd0000000190}) \cite{Rodriguez2006}, and a model of the pentose phosphate pathway in \emph{Trypanosoma brucei} ({\tt biomd0000000513}) \cite{Bakker1997,Kerkhoven2012,Kerkhoven2013}.

\subsection{Polyamine Metabolism}
\label{polyaminesection}

Consider the model of polyamine metabolism given in Table \ref{table2} which corresponds to model {\tt biomd0000000190} in the BioModels database \cite{Rodriguez2006}.

\begin{table}[h]
	\centering
	\footnotesize
	\begin{tabular}{|l|l|}
		\hline
		\rowcolor[gray]{0.9}
		(1) $Met \rightarrow SAM$ &
		(7) $aS \rightarrow D$ \\
		(2) $SAM \rightarrow dcSAM$ &
		(8) $D + Acetyl\mbox{-}CoA \rightarrow aD + CoA$ \\
		\rowcolor[gray]{0.9}
		(3) $Orn \rightarrow P$ \; \; \; \; \; \; \; \; \; \; \;  &
		(9) $aD \rightarrow P$ \\
		(4) $P + dcSAM \rightarrow D$ &
		(10) $CoA \rightleftarrows Acetyl\mbox{-}CoA$ \\
		\rowcolor[gray]{0.9}
		(5) $D + dcSAM \rightarrow S$ &
		(11) $P \rightarrow \O$ \\
		(6) $S + Acetyl\mbox{-}CoA \rightarrow aS + CoA$ &
		(12) $aD \rightarrow \O$ \\
		\hline
	\end{tabular}
	\caption{Reactions for Polyamine Metabolism ($Met$ - methionine, $SAM$ - S-adenosylmethionine, $dcSAM$ - S-adenosylmethionine decarboxylase, $Orn$ - ornithine, $P$ - putrescine, $D$ - spermidine, $S$ - spermine, $aS$ - N-acetyl-spermine, $aD$ - N-acetylspermidine.)}
	\label{table2}
\end{table}

The program identifies the CRN as subconservative and exhibiting an extinction event according to Corollary \ref{maincorollary}. It returns the following sets:
\[\begin{split}
\mathcal{Y} & = \{ \O, dcSAM, S, D, CoA, Acetyl\mbox{-}CoA\} \\
\mathcal{D} & = \{ aS + CoA \to aS, aD +CoA \to aD, P + cdSAM \to P \}.
\end{split}\]
It follows from Corollary \ref{maincorollary} that the following complexes are transient:
\[\mathcal{Y}^c = \{ Met, SAM, Orn, P + dcSAM, D + cdSAM, S + Acetyl\mbox{-}CoA, aS, D + Acetyl\mbox{-}CoA, aD\}.\]
The program further identifies $Met$ and $Orn$ as source only species which suggests that they are inputs to the system. Indeed, the results are consistent with the observation that the mechanism requires input of $Met$ and $Orn$ in order to maintain function as without these species all reactions except (12) will eventually shut down.

\begin{remark}
It is worth noting that, although the complexes in $\mathcal{Y}^c$ are guaranteed to be transient by Corollary \ref{maincorollary}, it is not necessarily the case that the complexes in $\mathcal{Y}$ are recurrent. For this example, we have $\{ S,Acetyl\mbox{-}CoA \} \subset\mathcal{Y}$ and $S + Acetyl\mbox{-}CoA \in \mathcal{Y}^c$. It cannot be the case that both $S$ and $Acetyl\mbox{-}CoA$ are recurrent since this would imply the recurrence of $S + Acetyl\mbox{-}CoA$ and therefore $S + Acetyl\mbox{-}CoA \not\in \mathcal{Y}^c$. It can inferred from the CRN that $S$ is transient despite not being an element of $\mathcal{Y}^c$. That is, while the program returned a set of transient complexes, it did not return the maximal such set.
\end{remark}

\subsection{Pentose Phosphate Pathway}
\label{pentosesection}

Consider the model for the pentose phosphate pathway in \emph{Trypanosoma brucei} given in Table \ref{table1} which corresponds to model {\tt biomd0000000513} in the BioModels Database\cite{Bakker1997,Kerkhoven2012,Kerkhoven2013}. The algorithm presented in Section \ref{algorithmsection} identifies this mechanism as having a guaranteed extinction event.

\begin{table}[h]
	\centering
	\footnotesize
	\begin{tabular}{|l|l|}
		\hline
		\rowcolor[gray]{0.9}
		(1) $Glc_e \rightleftarrows Glc_c$ &
		(11) $Glc_g + ATP_g \rightleftarrows Glc\mbox{-}6\mbox{-}P_g + ADP_g$ \\
		(2) $Glc_c \rightleftarrows Glc_g$ &
		(12) $Fru\mbox{-}6\mbox{-}P_g + ATP_g \rightleftarrows Fru\mbox{-}1\mbox{,}6\mbox{-}BP_g+ADP_g$ \\
		\rowcolor[gray]{0.9}
		(3) $Glc\mbox{-}6\mbox{-}P_g \rightleftarrows Fru\mbox{-}6\mbox{-}P_g$ \; \; \; \; \; \; \; \; \; \; \;  &
		(13) $Fru\mbox{-}1\mbox{,}6\mbox{-}BP_g \rightleftarrows DHAP_g + GA\mbox{-}3\mbox{-}P_g$ \\
		(4) $Gly\mbox{-}3\mbox{-}P_c \rightarrow DHAP_c$ &
		(14) $DHAP_c + Gly\mbox{-}3\mbox{-}P_g \rightleftarrows DHAP_g + Gly\mbox{-}3\mbox{-}P_c$ \\
		\rowcolor[gray]{0.9}
		(5) $DHAP_g \rightleftarrows GA\mbox{-}3\mbox{-}P_g$ &
		(15) $DHAP_g + NADH_g \rightleftarrows NAD^+_g + Gly\mbox{-}3\mbox{-}P_g$ \\
		(6) $3\mbox{-}PGA_c \rightleftarrows 2\mbox{-}PGA_c$ &
		(16) $Gly\mbox{-}3\mbox{-}P_g + ADP_g \rightleftarrows Gly_e + ATP_g$ \\
		\rowcolor[gray]{0.9}
		(7) $3\mbox{-}PGA_g \rightleftarrows 3\mbox{-}PGA_c$ &
		(17) $GA\mbox{-}3\mbox{-}P_g + NAD^+_g + Pi_g \rightleftarrows 1\mbox{,}3\mbox{-}BPGA_g + NADH_g$ \\
		(8) $2\mbox{-}PGA_c \rightleftarrows PEP_c$ &
		(18) $1\mbox{,}3\mbox{-}BPGA_g + ADP_g \rightleftarrows 3\mbox{-}PGA_g + ATP_g$ \\
		\rowcolor[gray]{0.9}
		(9) $Pyr_c \rightarrow Pyr_e$ &
		(19) $PEP_c + ATP_c \rightleftarrows Pyr_c + ADP_c$ \\
		(10) $ATP_c \rightarrow ADP_c$ &
		(20) $2 ADP_c \rightleftarrows ATP_c + AMP_c$ \\
		\rowcolor[gray]{0.9}
		&
		(21) $2ADP_g \rightleftarrows ATP_g + AMP_g$ \\
		\hline
	\end{tabular}
	\caption{Reactions for pentose phosphate pathway}
	\label{table1}
\end{table}

The algorithm identifies several structurally distinct pathways by which an extinction event can occur. The first is the following subnetwork of the dom-CRN, which incorporates reactions (9), (10), (19), and (20) as indexed in Table \ref{table1}:
\begin{center}
\begin{tikzpicture}[auto, outer sep=3pt, node distance=2cm,>=latex']
\node (C1) {PEP$_c$+ADP$_c$};
\node [right of = C1, node distance = 3.5cm] (C2) {Pyr$_c$+ATP$_c$};
\node [right of = C2, node distance = 2.5cm] (C3) {Pyr$_c$};
\node [right of = C3, node distance = 2.5cm] (C4) {Pyr$_e$};
\node [below of = C1, node distance = 1.5cm] (C5) {$2$ADP$_c$};
\node [below of = C2, node distance = 1.5cm] (C6) {AMP$_c+$ATP$_c$};
\node [below of = C3, node distance = 1.5cm] (C7) {ATP$_c$};
\node [below of = C4, node distance = 1.5cm] (C8) {ADP$_c$};
\path[->, bend left = 10] (C1) edge node [above left = -0.15cm] {\tiny $19f$} (C2);
\path[->, bend left = 10] (C2) edge node [below right = -0.15cm] {\tiny $19b$} (C1);
\path[->] (C3) edge node [above = -0.15cm] {\tiny $9$} (C4);
\path[->, bend left = 10] (C5) edge node [above left = -0.15cm] {\tiny $20f$} (C6);
\path[->, bend left = 10] (C6) edge node [below right = -0.15cm] {\tiny $20b$} (C5);
\path[->] (C7) edge node [above = -0.15cm] {\tiny $10$} (C8);
\path[dashed,->] (C6) edge node [below = -0.15cm] {\tiny $D$} (C7);
\path[dashed,->] (C2) edge node [right = -0cm] {\tiny $D$} (C7);
\path[dashed,->] (C2) edge node [above = -0.15cm] {\tiny $D$} (C3);
\end{tikzpicture}
\end{center}
The program identifies the complexes PEP$_c$+ADP$_c$, Pyr$_c$+ATP$_c$, Pyr$_c$, $2$ADP$_c$, AMP$_c+$ATP$_c$, and ATP$_c$ as transient. It follows immediately that, at any extinction state, we have the following counts: Pyr$_c = 0$, ADP$_c \leq 1$, and ATP$_c = 0$. It can be furthermore seen from reactions (10) and (20) that ADP$_c = 0$ is not absorbing, so that ADP$_c \geq 1$ infinitely often. It follows that ADP $ =1$ at the extinction states, from which it follows from the transience of PEP$_c$ + ADP$_c$ that PEP$_c = 0$ at the extinction states.

We now construct the pathway to extinction. It follows from the conservation on the original CRN and the observation that Pyr$_e$ only appears as a product in any reaction that any reaction which forms Pyr$_e$ must have a final occurrence. Furthermore, we can exhaust PEP$_c$ by converting it into Pyr$_c$ through the forward reaction in (19) since ADP$_c \geq 1$ infinitely often, as previously argued. A less trivial pathway to extinction occurs for the species ADP$_c$ and ATP$_c$. The conservation
\[\mbox{AMP}_c + \mbox{ADP}_c + \mbox{ATP}_c = \mbox{total}_A\]
suggests that the system may become locked by converting ADP$_c$ into AMP$_c$ and ATP$_c$ through reaction (20), and then converting ATP$_c$ into ADP$_c$ through reaction (10), and then repeating as many times as possible. Eventually we will arrive at a state where ADP$_c = 0$, AMP$_c = \mbox{total}_A-1$, and ATP$_c =1$, after which reaction (10) locks both reactions. We must, however, consider the possible that ATP$_c$ is converted to ADP$_c$ by another pathway and, in fact, the backward reaction in (19) is exactly such a reaction. It is also, however, the only such pathway. Consequently reactions (9) and (19) must shut down before reactions (10) and (20) can be shut down. The complete sequence of reactions required to shut down the indicated complexes is therefore:
\begin{enumerate}
\item
Convert all possible substrates in PEP$_c$ or Pyr$_c$ and define total$_P = $PEP$_c+$Pyr$_c$.
\item
Fire the forward reaction in (19) to convert all PEP$_c$ into Pyr$_c$, replenishing ADP$_c$ through reaction (10) as required.
\item
Fire reaction (9) to convert all Pyr$_c$ into Pyr$_e$.
\item
Fire reaction (10) to convert all ATP$_c$ into ADP$_c$.
\item
Fire the forward reaction in (20) to convert all ADP$_c$ into AMP$_c$ and ATP$_c$.
\item
Repeat steps 4. and 5. until you arrive at the state ADP$_c = 0$, AMP$_c = \mbox{total}_A - 1$, and ATP$_c = 1$, and then fire (10).
\end{enumerate}
The final extinction state is PEP$_c = 0$, Pyr$_c = 0$, Pyr$_e = \mbox{total}_P$, AMP$_c = \mbox{total}_A$, ADP$_c = 1$, and ATP$_c = 0$. Notice that the extinction of PEP$_c$ is only guaranteed for all trajectories by the observation that ADP$_c \geq 1$ infinitely often. We also require that no more PEP$_c$ and Pyr$_c$ can be produced by the remaining pathways. This example illustrates that the program is able to identify transient complexes which might be very difficult to determine by direct analysis of potential pathways to extinction.

The other subnetwork which was identified as leading to extinction is the following, which incorporates reactions (4) and (14) in Table \ref{table1}:
\begin{center}
\begin{tikzpicture}[auto, outer sep=3pt, node distance=2cm,>=latex']
\node (C1) {DHAP$_c$+Gly\mbox{-}3\mbox{-}P$_g$};
\node [right of = C1, node distance = 4.5cm] (C2) {DHAP$_g$+Gly\mbox{-}3\mbox{-}P$_c$};
\node [right of = C2, node distance = 3.5cm] (C3) {Gly\mbox{-}3\mbox{-}P$_c$};
\node [right of = C3, node distance = 2.5cm] (C4) {DHAP$_c$};
\path[->, bend left = 10] (C1) edge node [above left = -0.15cm] {\tiny $14b$} (C2);
\path[->, bend left = 10] (C2) edge node [below right = -0.15cm] {\tiny $14b$} (C1);
\path[->] (C3) edge node [above = -0.15cm] {\tiny $4$} (C4);
\path[dashed,->] (C2) edge node [above = -0.15cm] {\tiny $D$} (C3);
\end{tikzpicture}
\end{center}
The program identifies the complexes DHAP$_c+$Gly$\mbox{-}3\mbox{-}$P$_g$, DHAP$_g+\mbox{Gly-}3\mbox{-}$P$_c$, and $\mbox{Gly-}3\mbox{-}$P$_c$ as transient. It follows that at the extinction state we have $\mbox{Gly-}3\mbox{-}$P$_c = 0$. It furthermore follows from the observation that DHAP$_c > 0$ at the extinction state that we must have Gly$\mbox{-}3\mbox{-}$P$_g = 0$. This is consistent with the observation that, after firing the forward reaction in (14) and then reaction (4) repeatedly to exhaust $\mbox{Gly-}3\mbox{-}$P$_g$ and $\mbox{Gly-}3\mbox{-}$P$_c$,  there is no mechanism by which to convert DHAP$_c$ or any of its derivatives back into $\mbox{Gly-}3\mbox{-}$P or any of its derivatives.

\begin{remark}
This example once again demonstrates that the set of transient complexes returned is not necessarily the maximal such set. The species PEP$_c$ is necessarily zero at any extinction state; however, PEP$_c$ appears as its own complex in Table \ref{table1} and this complex was not identified as transient by the program. From reactions (6), (7), and (8), we can furthermore identify the complexes $\mbox{PEP}_c$, 2-PGA$_c$, 3-PGA$_c$, and 3-PGA$_g$ as transient since they can all be transformed in Pyr$_e$. We have that the set of complexes which is necessarily transient at the extinction states is greater than that strictly guaranteed by Condition \ref{maincorollary}. %Future work will investigate further structural properties which can guarantee transience of complexes.
\end{remark}

\section{Conclusions}
\label{conclusions}

In this paper, we have presented a computational implementation of the conditions derived in the companion paper \cite{J-A-C-B} for an extinction event in a CRN with a discrete state space. We have run the program on one of the most widely studied network databases in system biology, the European Bioinformatics Institute's BioModels database. This work has yielded some mathematically and biologically interesting pathways by which extinction in biological systems may be attained, as indicated by our analysis of the model of polyamine metablism in mammals and the model of the pentose phosphate pathway in \emph{Trypanosoma brucei}.

The most notable avenue for future work opened up by the study is in extending the current results to determine the maximal set of transient complexes. Currently, even when Algorithm \ref{alg:implementation} guarantees an extinction event by Corollary \ref{maincorollary}, it does not necessary give the maximal such set. For example, for the polyamine metabolism model studied in Section \ref{polyaminesection}, the species $S$ is transient but was not identified as such. For the pentose phosphate pathway model studied in Section \ref{pentosesection}, PEP$_c$ is transient but not identified as such. Further work will investigate methods for expanding the transient complex set given by Corollary \ref{maincorollary} and Algorithm \ref{alg:implementation} into the maximal such set.

%This work raises several promising avenues for future work, which are further explored in the Supplemental Online Material:
%\begin{enumerate}
%\item
%While Corollary \ref{maincorollary} gives sufficient conditions for discrete extinction, it is know that they are not necessary. That is, there are CRNs which exhibit a discrete extinction event for which the conditions of Corollary \ref{maincorollary} and its generalization Corollary \ref{S-maincorollary} remain silent (see Example \ref{S-example100} of the Online Supplemental Material). This raises the question of whether there are structural conditions which are both sufficient and necessary for discrete extinction and, if so, which further structural components of the CRN might be utilized in such a result.
%\item
%While Corollary \ref{maincorollary} identifies a set of transient complexes, it does not always identify the maximal such set (see Example \ref{S-example102} in the Online Supplemental Material). Future work will focus on developing methods by which to extrapolate the given set of transient complexes to the maximal such set, and also to identify the structure of extinction states in terms of the involved species rather than complexes. Current examples suggest that the structure of the CRN's conservation relationships---which do not play a significant role in the current theory---may play a role in this extrapolation.
%\end{enumerate}

\section*{Acknowledgments}

The author was supported by  Army Research Office grant W911NF-14-1-0401 and the Henry Woodward Fund. The author is grateful to D. Anderson, G. Craciun, and R. Brijder for insightful comments and corrections during the early stages of this work, and E. Tonello for granting access to her superb CRN Python libraries.

\newpage

%\bibliographystyle{plain}
%\bibliography{myrefs}

\begin{thebibliography}{10}

\bibitem{Kerkhoven2012}
Fiona Achcar, Eduard~J. Kerkhoven, The~SilicoTryp Consortium, Barbara~M.
  Bakker, Michael~P. Barrett, and Rainer Breitling.
\newblock Dynamic modelling under uncertainty: The case of trypanosoma brucei
  energy metabolism.
\newblock {\em PLoS Comput. Biol.}, 8(1):e1002352, 2012.
%\newblock doi:10.1371/journal.pcbi.1002352.
%
%\bibitem{Anderson}
%David~F. Anderson.
%\newblock A modified next reaction method for simulating chemical systems with
%  time dependent propensities and delays.
%\newblock {\em J. Chem. Phys.}, 127(21):214107, 2007.
%
%\bibitem{Anderson2016}
%David~F. Anderson, Daniele Cappelletti, and Thomas~G. Kurtz.
%\newblock Finite time distributions of stochastically modeled chemical systems
%  with absolute concentration robustness.
%\newblock Available on the {A}r{X}iv at ar{X}iv:1604.03388, 2016.
%
%\bibitem{A-C-K}
%David~F. Anderson, Gheorghe Craciun, and Thomas~G. Kurtz.
%\newblock Product-form stationary distributions for deficiency zero chemical
%  reaction networks.
%\newblock {\em Bull. Math. Biol.}, 72(8):1947--1970, 2011.
%
\bibitem{A-E-J}
David~F. Anderson, German Enciso, and Matthew~D. Johnston.
\newblock Stochastic analysis of chemical reaction networks with absolute
  concentration robustness.
\newblock {\em J. R. Soc. Interface}, 11(93):20130943, 2014.
%
\bibitem{AndKurtz2011}
David~F. Anderson and Thomas~G. Kurtz.
\newblock Continuous time {Markov} chain models for chemical reaction networks.
\newblock In H.~Koeppl et~al., editor, {\em Design and Analysis of Biomolecular
  Circuits: Engineering Approaches to Systems and Synthetic Biology}, pages
  3--42. Springer, 2011.
%
\bibitem{AndKurtz2015}
David~F. Anderson and Thomas~G. Kurtz.
\newblock {\em Stochastic Analysis of Biochemical Systems}.
\newblock Springer, 2015.
%
\bibitem{Bakker1997}
Barbara~M. Bakker, Paul~A.M. Michels, Fred~R. Opperdoes, and Hans~V.
  Westerhoff.
\newblock Glycolysis in bloodstream form trypanosoma brucei ccan be understood
  in terms of the kinetics of the glycolytic enzymes.
\newblock {\em J. Biol. Chem.}, 272(6):3207--3215, 1997.
%
\bibitem{Bause-Kritzinger}
Falko Bause and Pieter~S. Kritzinger.
\newblock {\em Stochastic {P}etri {N}ets: {A}n {I}ntroduction to the {T}heory}.
\newblock Vieweg Verlag 2. Aufl., 2002.
%
%\bibitem{Boros2013-1}
%Bal\'{a}zs Boros.
%\newblock On the dependence of the existence of the positive steady states on
%  the rate coefficients for deficiency-one mass action systems: single linkage
%  class.
%\newblock {\em J. Math. Chem.}, 51(9):2455--2490, 2013.
%
\bibitem{Brijder}
Robert Brijder.
\newblock Dominant and {T}-{I}nvariants for {P}etri {N}ets and {C}hemical
  {R}eaction {N}etworks.
\newblock {\em Lecture Notes in Comput. Sci.}, 9211:1--15, 2015.
%\newblock Available on the {A}r{X}iv at arxiv:1503.04005.
%
%\bibitem{C-D-S-S}
%Gheorghe Craciun, Alicia Dickenstein, Anne Shiu, and Bernd Sturmfels.
%\newblock Toric dynamical systems.
%\newblock {\em J. Symbolic Comput.}, 44(11):1551--1565, 2009.
%
%\bibitem{Enciso2016}
%German~A. Enciso.
%\newblock Transient absolute robustness in stochastic biochemical networks.
%\newblock {\em J. R. Soc. Interface}, 13(121):20160475, 2016.
%%\newblock doi: 10.1098/rsif.2016.0475.
%
\bibitem{F3}
Martin Feinberg.
\newblock Lectures on chemical reaction networks.
\newblock Unpublished written versions of lectures given at the Mathematics
  Research Center, University of Wisconsin.
%
%\bibitem{F1}
%Martin Feinberg.
%\newblock Complex balancing in general kinetic systems.
%\newblock {\em Arch. Ration. Mech. Anal.}, 49:187--194, 1972.
%
%\bibitem{Fe2}
%Martin Feinberg.
%\newblock Chemical reaction network structure and the stability of complex
%  isothermal reactors: {I.} the deficiency zero and deficiency one theorems.
%\newblock {\em Chem. Eng. Sci.}, 42(10):2229--2268, 1987.
%
%\bibitem{Fe4}
%Martin Feinberg.
%\newblock Chemical reaction network structure and the stability of complex
%  isothermal reactors: {II.} multiple steady states for networks of deficiency
%  one.
%\newblock {\em Chem. Eng. Sci.}, 43(1):1--25, 1988.
%
\bibitem{H-F}
Martin Feinberg and Fritz Horn.
\newblock Chemical mechanism structure and the coincidence of the
  stoichiometric and kinetic subspaces.
\newblock {\em Arch. Rational Mech. Anal.}, 66:83--97, 1977.
%
\bibitem{Gabor2015}
Attila~G\'{a}bor, Katalin M. Hangos, Julio R. Banga, and G\'{a}bor~Szederk\'{e}nyi.
\newblock Reaction network realizations of rational biochemical systems and
  their structural properties.
\newblock {\em J. Math. Chem.}, 53:1657--1686, 2015.
%
%\bibitem{G-B}
%Michael~A. Gibson and Jehoshua Bruck.
%\newblock Efficient exact stochastic simulation of chemical systems with many
%  species and many channels.
%\newblock {\em J. Phys. Chem. A}, 104:1876--1889, 2000.
%
%\bibitem{Gillespie}
%Daniel Gillespie.
%\newblock A general method for numerically simulating the stochastic time
%  evolution of coupled chemical reactions.
%\newblock {\em J. Comput. Phys.}, 22(4):403--434, 1976.
%
%\bibitem{H}
%Fritz Horn.
%\newblock Necessary and sufficient conditions for complex balancing in chemical
%  kinetics.
%\newblock {\em Arch. Ration. Mech. Anal.}, 49:172--186, 1972.
%
\bibitem{H-J1}
Fritz Horn and Roy Jackson.
\newblock General mass action kinetics.
\newblock {\em Arch. Ration. Mech. Anal.}, 47:81--116, 1972.
%
%\bibitem{J1}
%Matthew~D. Johnston.
%\newblock Translated chemical reaction networks.
%\newblock {\em Bull. Math. Biol.}, 76(5):1081--1116, 2014.
%%\newblock Available on the {A}r{X}iv at ar{X}iv:1305.5845.
%
\bibitem{J4}
Matthew~D. Johnston.
\newblock A linear programming approach to dynamical equivalence, linear
  conjugacy, and the deficiency one theorem.
\newblock {\em J. Math. Chem.}, 54(8):1612--1631, 2016.
%\newblock Available on the {A}r{X}iv at ar{X}iv:1602.08396.
%
\bibitem{J-A-C-B}
Matthew D. Johnston, David Anderson, Gheorghe Craciun, and Robert Brijder.
\newblock Conditions for extinction events in chemical reaction networks with discrete state spaces.
\newblock Available on the {A}r{X}iv at ar{X}iv:1701.02012, 2017.
%
\bibitem{J-P-D}
Matthew~D. Johnston, Casian Pantea, and Pete Donnell.
\newblock A computational approach to persistence, permanence, and endotacticity
  of chemical reaction networks.
\newblock {\em J. Math. Biol.}, 72(1):467--498, 2016. 
%\newblock To appear.
%
\bibitem{J-S6}
Matthew D. Johnston, David Siegel, and G\'{a}bor Szederk\'{e}nyi.
\newblock Computing weakly reversible linearly conjugate chemical reaction networks with minimal deficiency.
\newblock {\em Math. Biosci.}, 241(1), 88-98, 2013.
%\bibitem{J-S2}
%Matthew~D. Johnston and David Siegel.
%\newblock Linear conjugacy of chemical reaction networks.
%\newblock {\em J. Math. Chem.}, 49(7):1263--1282, 2011.
%
\bibitem{J-S4}
Matthew~D. Johnston, David Siegel, and G\'{a}bor Szederk\'{e}nyi.
\newblock A linear programming approach to weak reversibility and linear
  conjugacy of chemical reaction networks.
\newblock {\em J. Math. Chem.}, 50(1):274--288, 2012.
%\newblock DOI: 10.1007/s10910-011-9911-7. Available on the arXiv at
%  arxiv:1107.1659.
%
\bibitem{Kerkhoven2013}
Eduard~J. Kerkhoven, Fiona Achcar, Vincentn~P. Alibu, Richard~J. Burchmore,
  Ian~H. Gilbert, Maciej Trybi{\l}o, Nicole~N. Driessen, David Gilbert, Rainer
  Breitling, Barbara~M. Bakker, and Michael~P. Barrett.
\newblock Handling uncertainty in dynamic models: The pentose phosphate pathway
  in trypanosoma brucei.
\newblock {\em PLoS Comput. Biol.}, 9(12):e1003371, 2013.
%
\bibitem{Kurtz2}
Tom~G. Kurtz.
\newblock The relationship between stochastic and deterministic models for
  chemical reactions.
\newblock {\em J. Chem. Phys.}, 57:2976--2978, 1972.
%
\bibitem{DBLP:conf/ac/MemmiR75}
G{\'{e}}rard Memmi and G{\'{e}}rard Roucairol.
\newblock Linear algebra in net theory.
\newblock In Wilfried Brauer, editor, {\em Net Theory and Applications},
  volume~84 of {\em Lecture Notes in Comput. Sci.}, pages 213--223. Springer,
  1975.
%  
\bibitem{Lawler}
Gregory F. Lawler.
\newblock {\em Introduction to Stochastic Processes.}
\newblock Chapman \& Hall, 2006.
%
\bibitem{M-M}
Leonor Michaelis and Maud Menten.
\newblock Die kinetik der invertinwirkung.
\newblock {\em Biochem. Z.}, 49:333--369, 1913.
%
\bibitem{P-C-K}
Lo\"{i}c Paulev\'{e}, Gheorghe Craciun, and Keinz Koeppl.
\newblock Dynamical properties of discrete reaction networks.
\newblock {\em J. Math. Biol.}, 69(1):55--72, 2014.
%
\bibitem{PetriNet/review/Pet1977}
James~L. Peterson.
\newblock Petri nets.
\newblock {\em ACM Comput. Surv.}, 9(3):223--252, 1977.
%
\bibitem{Rodriguez2006}
Carlos Rodr\'{i}guez-Caso, Ra\'{u}l Monta\~{n}ez, Marta Cascante, Francisca S\'{a}nchez-Jim\'{e}nez, and Miguel A. Medina. Mathematical modeling of polyamine metabolism in mammals. {\em J. Biol. Chem.}, 281(31):21799-21812, 2006.
%
\bibitem{Rudan2014}
J\'{a}nos Rudan, G\'{a}bor Szederk\'{e}nyi, Katalin M. Hangos, and Tam\'{a}s P\'{e}ni.
\newblock Polynomial time algorithms to determine weakly reversible
  realizations of chemical reaction networks.
\newblock {\em J. Math. Chem.}, 52(5):1386--1404, 2014.
%
\bibitem{Sh-F}
Guy Shinar and Martin Feinberg.
\newblock Structural sources of robustness in biochemical reaction networks.
\newblock {\em Science}, 327(5971):1389--1391, 2010.
%
\bibitem{Sz2}
Gabor Szederk\'{e}nyi.
\newblock Computing sparse and dense realizations of reaction kinetic systems.
\newblock {\em J. Math. Chem.}, 47:551--568, 2010.
%
\bibitem{tonello2016}
Elisa Tonello. Crnpy: a python library from the analysis of chemical reaction networks. 2016.
\end{thebibliography}

%\subsection*{Appendix A (Proof of Theorem \ref{foresttheorem})}

%\subsection*{Appendix B (Proof of Theorem \ref{tieringtheorem})}

\begin{appendices}

\section{Background Results}
\label{appendixa}

We require the following results in order to computationally implement the modules of Algorithm \ref{alg:implementation}. The proofs of Theorem \ref{foresttheorem} and Theorem \ref{tieringtheorem} can be found later in this Appendix. The proof of Theorem \ref{kerneltheorem} can be found in the Appendix of \cite{H-F}.

\begin{theorem}
\label{foresttheorem}
Consider a CRN $(\mathcal{S},\mathcal{C},\mathcal{R})$ and a $\mathcal{Y}$-admissible dom-CRN $(\mathcal{S},\mathcal{C},\mathcal{R} \cup \mathcal{D})$ where $\mathcal{Y} \subseteq \mathcal{C}$ is an absorbing complex set on the dom-CRN. Let $I_{a}$ denote the adjacency matrix of the dom-CRN. Consider a subnetwork of the dom-CRN $(\mathcal{S},\mathcal{C},\mathcal{R}_F \cup \mathcal{D}_F)$ where $\mathcal{R}_F \subseteq \mathcal{R}$ and $\mathcal{D}_F \subseteq \mathcal{D}$, and suppose that, for all $y_i \not\in \mathcal{Y}$, $y_i$ is the source for exactly one reaction in the subnetwork $(\mathcal{S},\mathcal{C},\mathcal{R}_F \cup \mathcal{D}_F)$. Then the following are equivalent:
\begin{enumerate}
\item
The subnetwork $(\mathcal{S},\mathcal{C},\mathcal{R}_F \cup \mathcal{D}_F)$ is an $\mathcal{Y}$-exterior forest.
\item
There are no cycles in the $\mathcal{Y}$-exterior portion of the reaction graph of $(\mathcal{S},\mathcal{C},\mathcal{R}_F \cup \mathcal{D}_F)$.
%\item
%The nonterminal portion of $\mathcal{N}_F$ contains no cycles, and every complex on the nonterminal portion of $\mathcal{N}_F$ is the source for exactly one reaction.
\item
There is a vector $\mathbf{v} = (\mathbf{v}_R,\mathbf{v}_D) \in \mathbb{R}_{\geq 0}^{r+d}$ such that $\mbox{supp}(\mathbf{v}_R) = \mbox{supp}(\mathcal{R}_F)$, $\mbox{supp}(\mathbf{v}_D) = \mbox{supp}(\mathcal{D}_F)$, and $[I_a \mathbf{v}]_j < 0$ for every $j \in \{1, \ldots, n\}$ such that $y_j \not\in \mathcal{Y}$.
\end{enumerate}
\end{theorem}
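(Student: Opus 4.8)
The plan is to establish the chain of equivalences $(1) \Leftrightarrow (2) \Leftrightarrow (3)$, using the standing hypothesis that every $\mathcal{Y}$-exterior complex $y_i$ is the source of exactly one reaction in $(\mathcal{S},\mathcal{C},\mathcal{R}_F \cup \mathcal{D}_F)$. This hypothesis is the structural backbone: it means the $\mathcal{Y}$-exterior part of the subnetwork is a \emph{functional graph} (out-degree exactly one at every exterior node), and a functional graph on finitely many vertices is a disjoint union of ``rho-shaped'' components, each consisting of trees feeding into a single cycle or into an exit vertex. The whole argument amounts to recognizing that ``reaches $\mathcal{Y}$ along a unique path'' and ``contains no cycle'' and ``admits a strictly decreasing potential flow'' are three ways of saying the same thing about such a graph.

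For $(1) \Leftrightarrow (2)$: Given out-degree one at each exterior vertex, following reactions forward from any $y_i \not\in \mathcal{Y}$ produces a uniquely determined walk; it must eventually either hit $\mathcal{Y}$ (and stop, since $\mathcal{Y}$ is absorbing and by convention all $\mathcal{Y}$-interior reactions are included but we only count exterior steps) or revisit a vertex, creating a cycle in the exterior portion. So ``every exterior vertex reaches $\mathcal{Y}$'' is equivalent to ``no such walk cycles,'' i.e. to (2). Uniqueness of the path in Definition~\ref{forest1} is automatic once a path exists, again because out-degree is one. I would write this out carefully as the first step.

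For $(2) \Rightarrow (3)$: Assuming acyclicity of the exterior portion, I would build $\mathbf{v}$ by a topological/induction argument. Order the exterior complexes so that each $y_i$'s unique outgoing reaction points either into $\mathcal{Y}$ or to a vertex later in the order (possible precisely because there are no exterior cycles). Assign weights from the ``far'' end backward: set the weight of $y_i$'s unique outgoing edge large enough to exceed the total incoming weight from edges already assigned — concretely one can take weights growing geometrically with the topological depth, e.g. assign edge-weight $1$ to the edges out of sinks-into-$\mathcal{Y}$ and multiply by (one more than the max in-degree) at each level. Then $\mbox{supp}(\mathbf{v}_R) = \mbox{supp}(\mathcal{R}_F)$ and $\mbox{supp}(\mathbf{v}_D) = \mbox{supp}(\mathcal{D}_F)$ by construction, and $[I_a \mathbf{v}]_j = (\text{incoming weight}) - (\text{outgoing weight}) < 0$ for each exterior $y_j$ by the choice. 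For $(3) \Rightarrow (2)$: if the exterior portion contained a cycle supported in $\mathcal{R}_F \cup \mathcal{D}_F$, summing the coordinates $[I_a\mathbf{v}]_j$ over the complexes $y_j$ on that cycle telescopes to $0$ (each edge of the cycle contributes $+v_k$ at its head and $-v_k$ at its tail), contradicting strict negativity of each term. This closes the loop.

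I expect the main obstacle to be purely bookkeeping: handling the $\mathcal{Y}$-interior reactions cleanly (they are all included in $F$ by convention but condition (3) only constrains exterior coordinates of $I_a\mathbf{v}$, so I must be careful that interior edges carry positive weight without disturbing the exterior inequalities), and making the geometric-growth weight assignment rigorous — in particular verifying that at a vertex receiving flow from several exterior predecessors the outgoing weight still dominates, which forces the growth factor to depend on the maximum exterior in-degree rather than being a fixed constant. None of the steps is deep, but the indexing between $\mathcal{R}_F$, $\mathcal{D}_F$, the adjacency matrix $I_a$ of the dom-CRN, and the complex partition $\mathcal{Y}$ versus $\mathcal{Y}^c$ needs to be tracked with care.
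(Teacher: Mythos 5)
Your proposal follows essentially the same route as the paper's proof: $(1)\Leftrightarrow(2)$ via the out-degree-one/unique-walk observation, $(2)\Rightarrow(3)$ by an inductive level-by-level weight assignment in which each exterior vertex's unique outgoing edge is made to dominate the sum of its already-assigned incoming weights (the paper parametrizes this as $V_k = V_{k-1}/n_k - \delta$ working outward from $\mathcal{Y}$), and $(3)\Rightarrow(2)$ by summing $[I_a\mathbf{v}]_j$ around a putative cycle and telescoping. One concrete detail is stated backwards: assigning weight $1$ to the edges entering $\mathcal{Y}$ and multiplying by a factor greater than one at each level \emph{away} from $\mathcal{Y}$ makes the incoming sums exceed the outgoing weight; the weights must \emph{decrease} with depth (equivalently, start at the far end and grow toward $\mathcal{Y}$), which is what your general prescription in the same sentence correctly requires.
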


%\begin{lemma}
%\label{biglemma}
%Consider a chemical reaction network $\mathcal{N} = (\mathcal{S},\mathcal{C},\mathcal{R})$ with terminal SLCs $\mathcal{T} \subseteq \mathcal{W}$. Then $\mathcal{N}_F = (\mathcal{S},\mathcal{C},\mathcal{R}_F)$ where $\mathcal{R}_F \subseteq \mathcal{R}$ is a terminal forest if and only if the nonterminal portion contains no cycles, and every complex on the nonterminal portion is the source for exactly one reaction.
%\end{lemma}

\begin{theorem}
\label{tieringtheorem}
Consider a CRN $(\mathcal{S},\mathcal{C},\mathcal{R})$, $\mathcal{Y}$-admissible dom-CRN $(\mathcal{S},\mathcal{C},\mathcal{R} \cup \mathcal{D})$, and $\mathcal{Y}$-exterior forest $(\mathcal{S},\mathcal{C},\mathcal{R}_F \cup \mathcal{D}_F)$ where $\mathcal{Y} \subseteq \mathcal{C}$ is an absorbing complex set on the dom-CRN. Let $I_{a}$ denote the adjacency matrix for the dom-CRN. Then a vector $\alpha = (\alpha_R,\alpha_D) \in \mathbb{R}^{r+d}_{\geq 0}$ with $\mbox{supp}(\alpha_R) \subseteq \mbox{supp}(\mathcal{R}_F)$ and $\mbox{supp}(\alpha_D) \subseteq \mbox{supp}(\mathcal{D}_F)$ satisfies Condition $3$ of Definition \ref{balancing} if and only if $[I_a \alpha]_{j} \leq 0$ for every $j \in \{1, \ldots, n\}$ such that $y_j \not\in \mathcal{Y}$.
\end{theorem}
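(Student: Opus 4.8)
The plan is to unwind both sides of the claimed equivalence into explicit statements about the weighted in\nobreakdash- and out\nobreakdash-flow at each $\mathcal{Y}$-exterior complex of the dom-CRN, and to observe that these statements coincide term by term. First I would recall that for the adjacency matrix $I_a$ of the dom-CRN, the entry $[I_a]_{j,i}$ equals $-1$ when $\rho(i)=j$, equals $+1$ when $\rho'(i)=j$, and equals $0$ otherwise, so that for any $\alpha=(\alpha_R,\alpha_D)\in\mathbb{R}^{r+d}_{\geq0}$,
\[ [I_a\alpha]_j \;=\; \sum_{i\,:\,\rho'(i)=j}\alpha_i \;-\; \sum_{i\,:\,\rho(i)=j}\alpha_i, \]
i.e.\ the net flow into the complex $y_j$ under the edge weights $\alpha$.

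Next I would fix a $\mathcal{Y}$-exterior complex $y_j$. Since $(\mathcal{S},\mathcal{C},\mathcal{R}_F\cup\mathcal{D}_F)$ is a $\mathcal{Y}$-exterior forest, $y_j$ lies on a unique path in $\mathcal{R}_F\cup\mathcal{D}_F$ to $\mathcal{Y}$, and hence is the source of exactly one edge $R_k\in\mathcal{R}_F\cup\mathcal{D}_F$; because $\mathrm{supp}(\alpha_R)\subseteq\mathrm{supp}(\mathcal{R}_F)$ and $\mathrm{supp}(\alpha_D)\subseteq\mathrm{supp}(\mathcal{D}_F)$, every other reaction of the dom-CRN with source $y_j$ carries weight zero. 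Therefore $\sum_{i:\rho(i)=j}\alpha_i=\alpha_k$, and the display above becomes
\[ [I_a\alpha]_j \;=\; \Bigl(\textstyle\sum_{i\,:\,\rho'(i)=j}\alpha_i\Bigr)\;-\;\alpha_k. \]
Writing $j=\rho(k)$, Condition $3$ of Definition \ref{balancing} applied to this edge $R_k$ reads $\alpha_k\geq\sum_{i:\,y_{\rho'(i)}=y_{\rho(k)}}\alpha_i=\sum_{i:\rho'(i)=j}\alpha_i$, which by the last display is exactly the inequality $[I_a\alpha]_j\leq0$.

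Finally I would note that the assignment $y_j\mapsto R_k$ ($y_j\notin\mathcal{Y}$, $R_k$ its unique outgoing forest edge) is a bijection from the set of $\mathcal{Y}$-exterior complexes onto $\{R_k\in\mathcal{R}_F\cup\mathcal{D}_F:y_{\rho(k)}\notin\mathcal{Y}\}$, so quantifying Condition $3$ over all such $R_k$ amounts to quantifying $[I_a\alpha]_j\leq0$ over all $\mathcal{Y}$-exterior $y_j$; this yields both directions of the equivalence simultaneously. The argument is essentially bookkeeping, and the only points needing care are (i) the reduction $\sum_{i:\rho(i)=j}\alpha_i=\alpha_k$, which uses both the forest hypothesis (each $\mathcal{Y}$-exterior complex is the source of exactly one forest edge) and the support restriction on $\alpha$; and (ii) checking that the inner sum in Condition $3$ and the product-side sum in $[I_a\alpha]_j$ may both be taken over the full edge index set $\{1,\dots,r+d\}$ of the dom-CRN without changing their values, precisely because $\alpha$ vanishes off the forest. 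With these observations the two expressions are literally identical, and the theorem follows.
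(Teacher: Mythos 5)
Your proposal is correct and follows essentially the same route as the paper: the paper's proof is precisely the computation $[I_a\alpha]_j = \bigl(\sum_{i:\rho'(i)=j}\alpha_i\bigr) - \alpha_k \leq 0 \Longleftrightarrow \alpha_k \geq \sum_{i:\rho'(i)=j}\alpha_i$ with $\rho(k)=j$, read off row by row. You in fact supply two details the paper leaves implicit --- why the outgoing sum collapses to the single term $\alpha_k$ (forest structure plus the support restriction on $\alpha$), and the bijection between $\mathcal{Y}$-exterior complexes and forest edges with $\mathcal{Y}$-exterior source that makes the two families of inequalities coincide --- so no changes are needed.
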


\begin{theorem}
\label{kerneltheorem}
Consider a CRN with terminal SLCs $\mathcal{T} = \{ T^{(1)}, \ldots, T^{(t)} \}$. Then the basis of ker$(A)$ consists of vectors $\mathbf{b}^{(i)} \in \mathbb{R}_{\geq 0}^n$ where
\[\mbox{supp}\left(\mathbf{b}^{(i)}\right) = \mbox{supp}\left(\mathcal{T}^{(i)}\right), \; \; \; i \in \{1, \ldots, t\}.\]
\end{theorem}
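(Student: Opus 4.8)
The plan is to recognize $A$ as (with the stated sign convention) the graph Laplacian of the reaction graph with all edge weights equal to $1$, and to reduce the computation of $\ker(A)$ to the strong linkage class decomposition. First I would record the elementary fact that each column of $A$ sums to zero, so $\mathbf{1}^T A = \mathbf{0}^T$; this controls the image of the diagonal blocks attached to terminal SLCs. Then I would order the complexes so that, whenever the condensation of the reaction graph has a directed path from the SLC $L'$ to the SLC $L$, the complexes of $L$ precede those of $L'$ (``downstream first''). With this ordering $A$ is block upper triangular, with one diagonal block $A_{LL}$ per SLC $L$.

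The next step is to analyze these diagonal blocks. If $L = T^{(i)}$ is terminal, then no edges leave $T^{(i)}$, so $A_{T^{(i)}T^{(i)}}$ is exactly the unit-weight Laplacian of the strongly connected induced subgraph on $T^{(i)}$; by the Perron--Frobenius theorem (equivalently, the directed Matrix--Tree theorem) it has rank $|T^{(i)}|-1$ and a one-dimensional kernel spanned by a strictly positive vector $\beta^{(i)}$. If $L$ is not terminal, then some complex of $L$ has an edge leaving $L$, so $-A_{LL}$ is irreducible (as $L$ is strongly connected), has nonpositive off-diagonal entries, is weakly column-diagonally dominant, and is strictly dominant in at least one column; by Taussky's theorem on irreducibly diagonally dominant matrices, $A_{LL}$ is nonsingular.

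With the blocks understood, I would assemble the basis. For each terminal SLC $T^{(i)}$, extend $\beta^{(i)}$ by zeros to $\mathbf{b}^{(i)} \in \mathbb{R}_{\geq 0}^n$; since $T^{(i)}$ has no outgoing edges, $A_{jk}=0$ whenever $j \notin T^{(i)}$ and $k \in T^{(i)}$, so $A\mathbf{b}^{(i)} = \mathbf{0}$, and $\mathrm{supp}(\mathbf{b}^{(i)}) = \mathrm{supp}(T^{(i)})$ because $\beta^{(i)} > 0$. Distinct terminal SLCs are disjoint, so $\mathbf{b}^{(1)},\dots,\mathbf{b}^{(t)}$ have disjoint supports and are linearly independent, giving $\dim\ker(A) \geq t$. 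Conversely, the rank of a block upper triangular matrix is at least the sum of the ranks of its diagonal blocks, so $\mathrm{rank}(A) \geq \sum_L \mathrm{rank}(A_{LL}) = \bigl(n - \sum_i |T^{(i)}|\bigr) + \sum_i \bigl(|T^{(i)}|-1\bigr) = n - t$, whence $\dim\ker(A) \leq t$. Equality forces $\{\mathbf{b}^{(i)}\}_{i=1}^t$ to be a basis with the asserted supports.

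The main obstacle is the spectral input on the diagonal blocks: the strict positivity of the kernel vector of a strongly connected Laplacian, and the nonsingularity of the ``sub-Laplacian'' attached to a non-terminal (transient) SLC. Both are classical and are precisely what underlies the standard structure theory of the kinetic matrix in CRNT; indeed the whole statement is proved in the Appendix of \cite{H-F}, and the block-triangular reorganization above is simply the way to package that input into the counting argument. A cleaner but heavier alternative is to invoke the directed Matrix--Tree theorem directly, writing each $\mathbf{b}^{(i)}$ in terms of spanning in-forest constants, which simultaneously delivers membership in $\ker(A)$, positivity on $T^{(i)}$, and the dimension count.
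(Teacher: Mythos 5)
Your proof is correct, but it cannot be compared line-by-line with the paper's own argument because the paper does not actually prove Theorem~\ref{kerneltheorem}: it simply defers to the Appendix of \cite{H-F}, where the analogous statement is established for the rate-constant-weighted Laplacian (the unweighted case being the specialization to unit weights). Your argument is a clean, self-contained version of that classical proof: ordering the complexes by the condensation of the reaction graph so that $A$ becomes block upper triangular with one diagonal block per SLC, using Perron--Frobenius (or the directed Matrix--Tree theorem) to get a one-dimensional, strictly positive kernel for each terminal block, and Taussky's irreducible-diagonal-dominance criterion to get nonsingularity of each non-terminal block, after which the rank inequality for block triangular matrices pins $\dim\ker(A)$ at exactly $t$. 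All the steps check out, including the extension of each $\beta^{(i)}$ by zeros (valid precisely because terminal SLCs have no outgoing edges, so the corresponding block column of $A$ outside $T^{(i)}$ vanishes). The only point worth tightening is the degenerate case of a singleton SLC, where ``irreducible'' is vacuous: a non-terminal singleton gives a $1\times 1$ block equal to $-\mathrm{outdeg}(y_j)<0$, hence nonsingular directly, and a terminal singleton gives the zero $1\times 1$ block with kernel spanned by $1>0$; both are consistent with your count. In short, you have supplied the proof the paper outsources, by essentially the same route as the cited source.
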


\begin{proof}[Proof of Theorem \ref{foresttheorem}]
We prove Condition $1 \Leftrightarrow$ Condition $2$ $\Leftrightarrow$ Condition $3$.\\

\noindent \emph{Condition $1 \Rightarrow$ Condition $2$}: Suppose that $(\mathcal{S},\mathcal{C},\mathcal{R}_F \cup \mathcal{D}_F)$ is a $\mathcal{Y}$-exterior forest and that there is a cycle in $\mathcal{Y}$-exterior portion of the reaction graph. Since every $\mathcal{Y}$-exterior complex has a path to $\mathcal{Y}$, there is a complex in the cycle which has a reaction leading out of the cycle. From this complex, however, there are two distinct paths which lead to $\mathcal{Y}$: (i) the direct path with the reaction leading out of the cycle as its first reaction; and (ii) the path which transverses the cycle first, then the path in (i). It follows that $(\mathcal{S},\mathcal{C},\mathcal{R}_F \cup \mathcal{D}_F)$ is not a $\mathcal{Y}$-exterior forest, which is a contradiction. It follows that Condition $1$ implies Condition $2$.\\

\noindent \emph{Condition $2 \Rightarrow$ Condition $1$}: Suppose there are no cycles in the $\mathcal{Y}$-exterior portion of the reaction graph. Consider a $\mathcal{Y}$-exterior complex $y_i \not\in \mathcal{Y}$. By definition, there is a unique reaction $(y_i,y_j) \in \mathcal{R}_F \cup \mathcal{D}_F$ with $y_i$ as its source complex. If $y_j \in \mathcal{Y}$, we are done; if $y_j \not\in \mathcal{Y}$, however, we continue the path with the unique reaction $(y_j,y_k) \in \mathcal{R}_F \cup \mathcal{D}_F$, and so on. Since there are a finite number of $\mathcal{Y}$-exterior complexes, this process may not terminate before either reaching a $\mathcal{Y}$-interior complex, or a complex already in the path, which creates a cycle and is therefore a contradiction. It follows that the path reaches $\mathcal{Y}$ and is unique. Since $y_i \not\in \mathcal{Y}$ was chosen arbitrarily, it follows that $(\mathcal{S},\mathcal{C},\mathcal{R}_F \cup \mathcal{D}_F)$ is a $\mathcal{Y}$-exterior forest. We have shown Condition $2$ implies Condition $1$.\\

\noindent \emph{Condition $2 \Rightarrow$ Condition $3$}: Suppose there are no cycles in the $\mathcal{Y}$-exterior portion of the reaction graph. We will inductively construct the required vector $\mathbf{v} = (\mathbf{v}_R, \mathbf{v}_D) \in \mathbb{R}_{\geq 0}^{r+d}$. %such that supp$(\mathbf{v}_R) =$ supp$(\mathcal{R}_F)$, supp$(\mathbf{v}_D) =$ supp$(\mathcal{D}_F)$, and $[I_a^d \cdot \mathbf{v}]_j < 0$ for every $j \in \{ 1, \ldots, n \}$ such that $y_j \not\in \mathcal{Y}$.

We start by setting $v_i = 0$ for $y_{\rho(i)} \to y_{\rho'(i)} \not\in \mathcal{R}_F \cup \mathcal{D}_F$. Next, we define the set:
\[I_0 = \{ i \in \{1, \ldots, r+d \} \; | \; y_{\rho(i)} \to y_{\rho'(i)} \in \mathcal{R}_F \cup \mathcal{D}_F \mbox{ where } y_{\rho(i)} \not\in \mathcal{Y} \mbox{ and } y_{\rho'(i)} \in \mathcal{Y}\}.\]
We set $V_0 = 1$ and $v_i = V_0$ for all $i \in I_0$. That is, reactions which are not contained in the $\mathcal{Y}$-exterior forest are assigned a weight of $0$ in the vector $\mathbf{v}$, and those which lead directly into $\mathcal{Y}$ from the $\mathcal{Y}$-exterior portion are assigned a weight of $1$.

We now inductively work out from $\mathcal{Y}$ in the $\mathcal{Y}$-exterior forest. We define the following:
\[ I_k = \{ i \in \{1, \ldots, r+d\} \; | \; y_{\rho(i)} \to y_{\rho'(i)} \in \mathcal{R}_F \cup \mathcal{D}_F \mbox{ where } \rho'(i) = \rho(j) \mbox{ for some } j \in I_{k-1}\}.\]
Since the set of complexes is finite, there are a finite number of such sets which are nonempty: $I_0, I_1, \ldots, I_{n_I -1}, I_{n_I}$. Further, since every $\mathcal{Y}$-exterior complex is the source for exactly one reaction in the $\mathcal{Y}$-exterior forest and there are no cycles, each such complex is assigned to exactly one set $I_k$, $k \in \{1, \ldots, n_I\}$. It follows that this is a partition of the $\mathcal{Y}$-exterior complexes, i.e. $I_{k_1} \cap I_{k_2} = \O$ for all $k_1, k_2 \in \{1, \ldots, n_I \},$ $k_1 \not= k_2$, and $\bigcup_{k=1}^{n_I} I_k = \mbox{supp}(\mathcal{Y})$.

Now define $n_k = |I_k|$ and
\[V_k = \frac{V_{k-1}}{n_k} - \delta, \; \; k \in \{1, \ldots, n_I\}\]
where $\delta > 0$ is a constant to be a determined later. For each $i \in I_k$, we set $v_i = V_k$.

Now consider a complex $y_j \not\in \mathcal{Y}$ and the reaction $y_{\rho(i)} \to y_{\rho'(i)} \in \mathcal{R}_F \cup \mathcal{D}_F$ where $\rho(i) = j$ and $i \in I_k$. Notice that this outgoing reaction is unique while there are at most $n_{k+1}$ reactions which lead to $y_j$. We therefore have:
\[
[I_a \mathbf{v}]_{j} \leq \left( \mathop{\sum_{l=1}^{r+d}}_{l \in I_{k+1}} v_l \right) - v_{i} = \sum_{l=1}^{n_{k+1}} \left( \frac{V_k}{n_{k+1}} - \delta \right) -V_k = \left( V_k - n_{k+1}\delta \right) - V_k = -n_{k+1}\delta < 0.
\]
We now choose $\delta > 0$ so that $\frac{V_k}{n_k} - \delta > 0$ for all $k \in \{ 1, \ldots, n_I \}$. We have constructed a vector $\mathbf{v} \in \mathbb{R}^{r + d}_{\geq 0}$ with the desired properties, which shows Condition $2$ implies Condition $3$.\\

\noindent \emph{Condition $3 \Rightarrow$ Condition $2$}: Suppose also that there is a cycle on the $\mathcal{Y}$-exterior portion of the reaction graph. We denote this cycle by:
\[
y_{\mu(1)} \rightarrow y_{\mu(2)} \rightarrow \cdots \rightarrow y_{\mu(l)} \rightarrow y_{\mu(1)}.
\]
We define the following sets:
\[I_{cycle} = \{ i \in \{1, \ldots, n \} \; | \; \mu(j) = i \mbox{ for some } j \in \{ 1, \ldots, l\} \}\]
and
\[I_{in}(j) = \{ k \in \{1, \ldots, n \} \; | \; y_i \to y_j \in \mathcal{R}_F \cup \mathcal{D}_F \mbox{ where } i \not\in I_{cycle} \mbox{ and } j \in I_{cycle}\}.\]
Notice that, since every $\mathcal{Y}$-exterior complex is the source for exactly one reaction, there are no reactions which lead out of the cycle.

Define $\mathbf{v} = (\mathbf{v}_R,\mathbf{v}_D) \in \mathbb{R}_{\geq 0}^{r+d}$ such that $\mbox{supp}(\mathbf{v}_R) = \mbox{supp}(\mathcal{R}_F)$, $\mbox{supp}(\mathbf{v}_D) = \mbox{supp}(\mathcal{D}_F)$. It follows that, for $j \in \{ 1, \ldots, l\}$, we have:
\begin{equation}
\label{512}
 [I_a \mathbf{v} ]_{\mu(j)} = \left( \sum_{i \in I_{in}(\mu(j))} v_i \right) + (v_{\mu(j-1)} - v_{\mu(j)}) < 0.
\end{equation}
Taking the summation of (\ref{512}) over $j \in \{1, \ldots, l \}$ gives
\[ \sum_{j=1}^l [I_a \mathbf{v} ]_{\mu(j)} =  \left( \sum_{j=1}^l \sum_{i \in I_{in}(\mu(j))} v_i \right) +  \sum_{j=1}^l v_{\mu(j-1)} -  \sum_{j=1}^l v_{\mu(j)} = \left( \sum_{j=1}^l \sum_{i \in I_{in}(\mu(j))} v_i \right) \geq 0 \]
%Since $J_c$ represents a cycle, we have that
%\[ \sum_{j=1}^l v_{\mu(j-1)} =  \sum_{j=1}^l v_{\mu(j)}\]
%so that
%\[ \left( \sum_{j=1}^l \sum_{i \in J_{in}(\mu(j))} v_i \right) < 0.\]
where the inequality follows from $\mathbf{v} \in \mathbb{R}_{\geq 0}^r$. This is inconsistent with the requirement that $[I_a \mathbf{v}]_j < 0$ for $j \in \{1, \ldots, n \}$ such that $y_j \not\in \mathcal{Y}$, so we have shown Condition $3$ implies Condition $2$.
\end{proof}

\begin{proof}[Proof of Theorem \ref{tieringtheorem}]
Consider a vector $\alpha = (\alpha_R,\alpha_D) \in \mathbb{R}^{r+d}_{\geq 0}$ with $\mbox{supp}(\alpha_R) \subseteq \mbox{supp}(\mathcal{R}_F)$ and $\mbox{supp}(\alpha_D) \subseteq \mbox{supp}(\mathcal{D}_F)$ such that $[I_a \alpha]_{j} \leq 0$ for every $j \in \{1, \ldots n\}$ such that $y_j \not\in \mathcal{Y}$. We have
\[[I_a \alpha]_{j} = \left( \mathop{\sum_{i=1}^{r+d}}_{\rho'(i) = j} \alpha_i \right) - \alpha_k \leq 0 \; \Longleftrightarrow \; \alpha_k \geq \mathop{\sum_{i=1}^{r+d}}_{\rho'(i) = j} \alpha_i.\]
where $\rho(k) = j$. It follows that $\alpha$ satisfies Condition $3$ of Definition \ref{balancing}.
\end{proof}

\section{Expanded Absorbing Set $\mathcal{Y}$}
\label{appendixb}

When applying Corollary \ref{maincorollary}, the choice of absorbing complex set $\mathcal{Y} \subseteq \mathcal{C}$ is often nontrivial. In fact, it is sometimes possible to show an extinction event occurs for one choice of $\mathcal{Y}$ but not for another (see Example 3.7 of the companion paper \cite{J-A-C-B}).

In this Appendix, we will consider an algorithmic way for starting with a minimal absorbing complex set $\mathcal{Y} \subseteq \mathcal{C}$ and systematically making it larger. We motivate the procedure with the following example.

\begin{example}
Consider the following CRN:
\begin{center}
\begin{tikzpicture}[auto, outer sep=3pt, node distance=2cm,>=latex']
\node (C1) {$X_1$};
\node [right of = C1, node distance = 2cm] (C2) {$X_2$};
\node [right of = C2, node distance = 2cm] (C3) {$X_3$};
\node [below of = C1, node distance = 0.75cm] (C4) {$X_1 + X_2$};
\node [right of = C4, node distance = 2.5cm] (C5) {$2X_1$};
\node [below of = C4, node distance = 0.75cm] (C6) {$X_3 + X_4$};
\node [right of = C6, node distance = 2.5cm] (C7) {$X_2 + X_4$};
\path[->] (C1) edge node [above = -0.15cm] {\tiny $1$} (C2);
\path[->] (C2) edge node [above = -0.15cm] {\tiny $2$} (C3);
\path[->] (C4) edge node [above = -0.15cm] {\tiny $3$} (C5);
\path[->] (C6) edge node [above = -0.15cm] {\tiny $4$} (C7);
\end{tikzpicture}
\end{center}
It can be seen directly that the CRN exhibits an extinction event since all of the $X_1$ can be converted into $X_2$ through reaction $1$. This shuts down reactions $1$ and $3$ and therefore eliminates the ability to replenish $X_1$.

We now attempt to obtain this result by applying Corollary \ref{maincorollary}. We need to first determine a $\mathcal{Y}$-admissible dom-CRN for some absorbing complex set $\mathcal{Y} \subseteq \mathcal{C}$ on the dom-CRN. We will attempt to determine the minimal such set $\mathcal{Y}$. This corresponds to the terminal complex set for the dom-CRN with the maximal set $\mathcal{D} \subseteq \mathcal{D}^*$. We have the following domination set:
\[\mathcal{D}^* = \{ X_1 + X_2 \to X_1, X_1 + X_2 \to X_2, 2X_1 \to X_1, X_2 + X_4 \to X_2, X_3 + X_4 \to X_3\}.\]
This produces the following dom-CRN:
\begin{center}
\begin{tikzpicture}[auto, outer sep=3pt, node distance=2cm,>=latex']
\node (C1) {$X_1$};
\node [right of = C1, node distance = 2.5cm] (C2) {$X_2$};
\node [right of = C2, node distance = 2.5cm] (C3) {$X_3$};
\node [above of = C2, node distance = 1.5cm] (C4) {$X_1 + X_2$};
\node [left of = C4, node distance = 2.5cm] (C5) {$2X_1$};
\node [below of = C3, node distance = 1.5cm] (C6) {$X_3 + X_4$};
\node [left of = C6, node distance = 2.5cm] (C7) {$X_2 + X_4$};
\path[->] (C1) edge node [above = -0.15cm] {\tiny $1$} (C2);
\path[->] (C2) edge node [above = -0.15cm] {\tiny $2$} (C3);
\path[->] (C4) edge node [above = -0.15cm] {\tiny $3$} (C5);
\path[->] (C6) edge node [above = -0.15cm] {\tiny $4$} (C7);
\path[dashed,->] (C4) edge node [above = -0.15cm] {\tiny $D$} (C1);
\path[dashed,->] (C4) edge node [right = -0.15cm] {\tiny $D$} (C2);
\path[dashed,->] (C5) edge node [left = -0.15cm] {\tiny $D$} (C1);
\path[dashed,->] (C7) edge node [right = -0.15cm] {\tiny $D$} (C2);
\path[dashed,->] (C6) edge node [right = -0.15cm] {\tiny $D$} (C3);
\end{tikzpicture}
\end{center}
This dom-CRN is not admissible for $\mathcal{Y} = \{ X_3 \}$ since the domination reaction $X_3 + X_4 \to X_3$ leads to the terminal complex $X_3$. It can, however, be made admissible by removing this reaction. We therefore start the process by considering the dom-CRN with
\[
\mathcal{D} = \{ 2X_1 \stackrel{D_1}{\longrightarrow} X_1, X_1 + X_2 \stackrel{D_2}{\longrightarrow} X_1, X_1 + X_2 \stackrel{D_3}{\longrightarrow} X_2, X_2 + X_4 \stackrel{D_4}{\longrightarrow} X_2\}.
\]
The corresponding admissible dom-CRN is as follows, where $\mathcal{Y}$ is shaded blue, and a path common to all exterior forests is shown in bold red: 
\begin{center}
\begin{tikzpicture}[auto, outer sep=3pt, node distance=2cm,>=latex']
\node (C1) {$X_1$};
\node [right of = C1, node distance = 2.5cm] (C2) {$X_2$};
\node [right of = C2, node distance = 2.5cm,ellipse,fill=blue!10] (C3) {$X_3$};
\node [above of = C2, node distance = 1.5cm] (C4) {$X_1 + X_2$};
\node [left of = C4, node distance = 2.5cm] (C5) {$2X_1$};
\node [below of = C3, node distance = 1.5cm] (C6) {$X_3 + X_4$};
\node [left of = C6, node distance = 2.5cm] (C7) {$X_2 + X_4$};
\path[->] (C1) edge node [above = -0.15cm] {\tiny $1$} (C2);
\path[red,line width=0.75mm,->] (C2) edge node [above = -0.15cm] {\tiny $\mathbf{2}$} (C3);
\path[->] (C4) edge node [above = -0.15cm] {\tiny $3$} (C5);
\path[red,line width=0.75mm,->] (C6) edge node [above = -0.15cm] {\tiny $\mathbf{4}$} (C7);
\path[dashed,->] (C4) edge node [above = -0.15cm] {\tiny $D_2$} (C1);
\path[dashed,->] (C4) edge node [right = -0.15cm] {\tiny $D_3$} (C2);
\path[dashed,->] (C5) edge node [left = -0.15cm] {\tiny $D_1$} (C1);
\path[red,line width=0.75mm,dashed,->] (C7) edge node [right = -0.15cm] {\tiny $\mathbf{D_4}$} (C2);
%\draw[blue,thick,dashed]     ($(C3.north west)+(-0.0,0.0)$) rectangle ($(C3.south east)+(0.0,-0.0)$);
\end{tikzpicture}
\end{center}
There are several exterior forests which need to be checked. Since the path in bold red above is common to all exterior forests, however, they are all balanced by the following vector $\alpha = (\alpha_R,\alpha_D) \in \mathbb{R}_{\geq 0}^8$:
\[\begin{array}{r} \mbox{\emph{reaction:}} \\ \hline \alpha = \end{array}\hspace{-0.025in}\underbrace{\begin{array}{llll} \; 1 & 2 & 3 & 4 \\ \hline (0, & 1, & 0, & 1, \end{array} }_{\alpha_R}\hspace{-0.025in}\underbrace{\begin{array}{llll} D_1 & D_2 & D_3 & D_4 \\ \hline \; 0, & \; 0, & \; 0, & \; 1.\end{array} }_{\alpha_D}.\]
%This is because every external forest contains the following path:
%\begin{equation}
%\label{path}
%X_3 + X_4 \stackrel{4}{\longrightarrow} X_2 + X_4 \stackrel{D_4}{\longrightarrow} X_2 \stackrel{2}{\longrightarrow} X_3.
%\end{equation}
Consequently, we cannot conclude anything by Corollary \ref{maincorollary}.

It is suggestive, however, that the vector $\alpha$ which balances the CRN does not have support on the portion of the CRN which experiences an extinction event (specifically, the reactions involving $X_1$). This suggests that we redefine the set $\mathcal{Y}$ to correspond to the portion of the dom-CRN where the balancing vector $\alpha$ has support. We therefore expand $\mathcal{Y}$ to include the complexes in the red path above:
\[\mathcal{Y}' = \{ X_2, X_3, X_3+X_4, X_2+X_4 \} \supseteq \mathcal{Y}.\]
Notice that, given this absorbing complex set $\mathcal{Y}'$, we must modify the set $\mathcal{D}$ in order to allow the resulting dom-CRN to be admissible. In particular, we may not include $X_1 + X_2 \stackrel{D_3}{\longrightarrow} X_2$ or $X_2 + X_4 \stackrel{D_4}{\longrightarrow} X_2$. We have the reduced set
\[\mathcal{D}' = \{ 2X_1 \stackrel{D_1}{\longrightarrow} X_1, X_1 + X_2 \stackrel{D_2}{\longrightarrow} X_1 \} \subseteq \mathcal{D}.\]
The corresponding dom-CRN is as follows, where the complexes in $\mathcal{Y}'$ are shaded blue, and a $\mathcal{Y}'$-exterior forest is shown in red ($\mathcal{Y}'$-interior reactions omitted):
\begin{center}
\begin{tikzpicture}[auto, outer sep=3pt, node distance=2cm,>=latex']
\node (C1) {$X_1$};
\node [right of = C1, node distance = 2.5cm,ellipse,fill=blue!10] (C2) {$X_2$};
\node [right of = C2, node distance = 2.5cm,ellipse,fill=blue!10] (C3) {$X_3$};
\node [above of = C2, node distance = 1.5cm] (C4) {$X_1 + X_2$};
\node [left of = C4, node distance = 2.5cm] (C5) {$2X_1$};
\node [below of = C3, node distance = 1.5cm,ellipse,fill=blue!10] (C6) {$X_3 + X_4$};
\node [left of = C6, node distance = 3cm,ellipse,fill=blue!10] (C7) {$X_2 + X_4$};
\path[red,line width=0.75mm,->] (C1) edge node [above = -0.15cm] {\tiny $\mathbf{1}$} (C2);
\path[->] (C2) edge node [above = -0.15cm] {\tiny $2$} (C3);
\path[->] (C4) edge node [above = -0.15cm] {\tiny $3$} (C5);
\path[->] (C6) edge node [above = -0.15cm] {\tiny $4$} (C7);
\path[red,line width=0.75mm,dashed,->] (C4) edge node [above = -0.15cm] {\tiny $\mathbf{D_2}$} (C1);
%\path[gray,dashed,->] (C4) edge node [right = -0.15cm] {$D_2$} (C2);
\path[red,line width=0.75mm,dashed,->] (C5) edge node [left = -0.15cm] {\tiny $\mathbf{D_1}$} (C1);
%\path[gray,dashed,->] (C7) edge node [right = -0.15cm] {$D_4$} (C2);
%\draw[blue,thick,dashed]     ($(C2.north west)+(-0.4,0.0)$) rectangle ($(C6.south east)+(0.0,-0.0)$);
\end{tikzpicture}
\end{center}
Notice that the set $\mathcal{Y}'$ is absorbing complex set but is not terminal, and that there is a unique path in the $\mathcal{Y}'$-exterior forest from each $\mathcal{Y}'$-exterior complex to the set $\mathcal{Y}'$. %Consider the exterior forest indicated in red:
%\begin{center}
%\begin{tikzpicture}[auto, outer sep=3pt, node distance=2cm,>=latex']
%\node (C1) {$X_1$};
%\node [right of = C1, node distance = 2.5cm] (C2) {$X_2$};
%\node [right of = C2, node distance = 2.5cm] (C3) {$X_3$};
%\node [above of = C2, node distance = 1.0cm] (C4) {$X_1 + X_2$};
%\node [left of = C4, node distance = 2.5cm] (C5) {$2X_1$};
%\node [below of = C1, node distance = 1cm] (C6) {$X_3 + X_4$};
%\node [right of = C6, node distance = 2.5cm] (C7) {$X_2 + X_4$};
%\path[red,->] (C1) edge node [above = -0.15cm] {\tiny $1$} (C2);
%\path[red,->] (C2) edge node [above = -0.15cm] {\tiny $2$} (C3);
%\path[->] (C4) edge node [above = -0.15cm] {\tiny $3$} (C5);
%\path[red,->] (C6) edge node [above = -0.15cm] {\tiny $4$} (C7);
%\path[red,dashed,->] (C4) edge node [above = -0.15cm] {\tiny $D_1$} (C1);
%\path[gray,dashed,->] (C4) edge node [right = -0.15cm] {$D_2$} (C2);
%\path[red,dashed,->] (C5) edge node [left = -0.15cm] {\tiny $D_2$} (C1);
%\path[gray,dashed,->] (C7) edge node [right = -0.15cm] {$D_4$} (C2);
%\end{tikzpicture}
%\end{center}
%Note that the system has the stoichiometric matrix $\Gamma$:
%\[\Gamma = \left[ \begin{array}{cccc} -1 & 0 & 1 & 0 \\ 1 & -1 & -1 & 1 \\ 0 & 1 & 0 & -1 \\ 0 & 0 & 0 & 0 \end{array} \right].\]
This $\mathcal{Y}'$-exterior forest is balanced if we can find a vector $\alpha = ((\alpha_R)_1, (\alpha_R)_2, (\alpha_R)_3, (\alpha_R)_4,(\alpha_D)_1, (\alpha_D)_2)$ such that $(\alpha_R)_1 > 0$ and:
\[\left\{ \; \; \begin{split} (\mbox{Cond.} \; 1): \; \; & \hspace{0.17in} (\alpha_R)_3 = 0 \\ (\mbox{Cond.} \; 2): \; \; & -(\alpha_R)_1 + (\alpha_R)_3 = 0 \\ & \hspace{0.17in} (\alpha_R)_1 - (\alpha_R)_2 - (\alpha_R)_3 + (\alpha_R)_4 = 0 \\ & \hspace{0.17in} (\alpha_R)_2 - (\alpha_R)_4 = 0 \\ (\mbox{Cond.} \; 3): \; \; & \hspace{0.17in} (\alpha_D)_1 + (\alpha_D)_2 \geq (\alpha_R)_1 \geq 0. \end{split} \right.\]
Condition 1 and the first equality in Condition 2 implies that $(\alpha_R)_1 = (\alpha_R)_3 = 0$ so that the forest in unbalanced. It follows from Corollary \ref{maincorollary} that the discrete state space system has a guaranteed extinction event and that the system is transient on $(\mathcal{Y}')^c = \{ 2X_1, X_1 + X_2, X_1\}$.
\end{example}

This example shows that it is sometimes possible to apply Corollary \ref{maincorollary} to guarantee an extinction event for one absorbing complex set $\mathcal{Y} \subseteq \mathcal{C}$ but not for another. While this is a limitation for our ability to apply Corollary \ref{maincorollary}, this example also suggests the following method for determining an initial set $\mathcal{Y}$ and then systematically expanding it:
\begin{enumerate}
\item
Initialize $\mathcal{Y}^* = \O$ and let $\mathcal{D}^*$ denote the domination set of the original CRN. 
\item
Set $\mathcal{Y} = \mathcal{Y}^*$ and $\mathcal{D} = \mathcal{D}^*$, then perform the following steps:
\begin{enumerate}
\item[(a)]
Add to $\mathcal{Y}$ all complexes which are connected by a directed path in the original CRN from a complex in $\mathcal{Y}$.
\item[(b)]
Add to $\mathcal{Y}$ any terminal complexes in the dom-CRN with $\mathcal{D}$.
\item[(c)]
Remove from $\mathcal{D}$ any domination reaction with its product complex in $\mathcal{Y}$.
\item[(d)]
Repeat (b) and (c) until the resulting dom-CRN is admissible.
\end{enumerate}
\item
For the $\mathcal{Y}$-exterior forests on this dom-CRN, attempt to compute balancing vectors $\alpha = (\alpha_R, \alpha_D) \in \mathbb{Z}_{\geq 0}^{r+d}$ with the maximal support (Definition \ref{balancing}).
\item
For a representative vector $\alpha$ from this set, let $\mathcal{Y}^*$ correspond to all the complexes corresponding to either the source or product complex of a reaction with $\alpha_k > 0$.
\item
Repeat steps 2-4 until either an unbalanced $\mathcal{Y}$-exterior forest is found, or $\mathcal{Y} = \mathcal{C}$. 
\end{enumerate}

\noindent Note that Condition 2(a) is only required if $\mathcal{Y}$ initially is not an absorbing complex set.

It is natural to wonder whether it is necessary to the continually refine the sets $\mathcal{Y}$ and $\mathcal{D}$ to ensure that the resulting dom-CRN is admissible. The following example shows that an incorrect conclusion may be reached if admissibility is not taken into account.

\begin{example}
Consider the following CRN, which corresponds to Example 3.10 of the companion paper \cite{J-A-C-B}:
\begin{center}
\begin{tikzpicture}[auto, outer sep=3pt, node distance=2cm,>=latex']
\node (C1) {\; \; \; \; $X_1$};
\node [right of = C1, node distance = 3cm] (C2) {$X_2$ \; \; \; \;};
\node [below of = C1, node distance = 0.75cm] (C3) {$X_2 + X_4$};
\node [right of = C3, node distance = 3cm] (C4) {$X_1+X_4$};
\node [below of = C3, node distance = 0.75cm] (C5) {$X_3 + X_5$};
\node [right of = C5, node distance = 3 cm] (C6) {$X_1 + X_5$};
\path[->] (C1) edge node [above = -0.15cm] {\tiny $1$} (C2);
\path[->] (C3) edge node [above = -0.15cm] {\tiny $2$} (C4);
\path[->, bend left = 10] (C5) edge node [above = -0.15cm] {\tiny $3$} (C6);
\path[->, bend left = 10] (C6) edge node [below = -0.15cm] {\tiny $4$} (C5);
\end{tikzpicture}
\end{center}
We attempt to apply the steps contained above to seed the sets $\mathcal{Y}$ and $\mathcal{D}$ and then systematically refine them.

We start with the set $\mathcal{D}^* = \{ X_1 + X_4 \to X_1, X_1 + X_5 \to X_1, X_2 + X_4 \to X_2\}$. We set $\mathcal{Y}$ to correspond to the terminal complexes in the resulting dom-CRN, which gives $\mathcal{Y} = \{ X_2 \}$. Since the domination reaction $X_2 + X_4 \to X_2$ leads to $\mathcal{Y}$, we must remove it from $\mathcal{D}$ in order for the resulting dom-CRN to be admissible. The resulting dom-CRN is given by the following, where the only $\mathcal{Y}$-exterior forest is indicated in red, and the set $\mathcal{Y}$ is indicated in blue:
\begin{center}
\begin{tikzpicture}[auto, outer sep=3pt, node distance=2cm,>=latex']
\node (C3) {$X_2 + X_4$};
\node [right of = C3, node distance = 3cm] (C4) {$X_1+X_4$};
\node [below of = C4, node distance = 0.5cm] (ghost) {};
\node [below of = C3, node distance = 1cm] (C5) {$X_3 + X_5$};
\node [right of = C5, node distance = 3cm] (C6) {$X_1 + X_5$};
\node [right of = ghost, node distance = 2.5cm] (C1) {$X_1$};
\node [right of = C1, node distance = 2.5cm,ellipse,fill=blue!10] (C2) {$X_2$};
\path[red,line width=0.75mm,->] (C1) edge node [above = -0.15cm] {\tiny $\mathbf{1}$} (C2);
\path[red,line width=0.75mm,->] (C3) edge node [above = -0.15cm] {\tiny $\mathbf{2}$} (C4);
\path[red,line width=0.75mm,->, bend left = 10] (C5) edge node [above = -0.15cm] {\tiny $\mathbf{3}$} (C6);
\path[->, bend left = 10] (C6) edge node [below = -0.15cm] {\tiny $4$} (C5);
\path[red,line width=0.75mm,dashed,->] (C4) edge node [above = -0.15cm] {\tiny $\mathbf{D_1}$} (C1);
\path[red,line width=0.75mm,dashed,->] (C6) edge node [below = -0.15cm] {\tiny $\mathbf{D_2}$} (C1);
%\draw[blue,thick,dashed]     ($(C2.north west)+(-0.0,0.0)$) rectangle ($(C2.south east)+(0.0,-0.0)$);
\end{tikzpicture}
\end{center}
The $\mathcal{Y}$-exterior forest indicated in red can be balanced by $\alpha = ((\alpha_R)_1,(\alpha_R)_2,(\alpha_R)_3,(\alpha_R)_4,$\\$(\alpha_D)_1,(\alpha_D)_2) = (1,1,0,0,1,0)$. While we cannot apply Corollary \ref{maincorollary} directly, we notice that $\alpha$ does not have full support on the reactions in the $\mathcal{Y}$-exterior forest above. We may therefore continue the algorithm presented earlier in this Appendix. We start by including in $\mathcal{Y}$ all those complexes which are contained in the reactions with $\alpha_k > 0$. This gives $\mathcal{Y} = \{ X_2 + X_4, X_1 + X_4, X_1, X_2\}$.

Now suppose we omit steps $2(a-d)$ in the algorithm presented earlier in this section. The resulting dom-CRN would be the following, where the only $\mathcal{Y}$-exterior forest is indicated in red ($\mathcal{Y}$-interior reactions not shown), and the absorbing complex set $\mathcal{Y}$ is shaded blue:
\begin{center}
\begin{tikzpicture}[auto, outer sep=3pt, node distance=2cm,>=latex']
\node [ellipse,fill=blue!10] (C3) {$X_2 + X_4$};
\node [right of = C3, node distance = 3cm,ellipse,fill=blue!10] (C4) {$X_1+X_4$};
\node [below of = C4, node distance = 0.5cm] (ghost) {};
\node [below of = C3, node distance = 1cm] (C5) {$X_3 + X_5$};
\node [right of = C5, node distance = 3cm] (C6) {$X_1 + X_5$};
\node [right of = ghost, node distance = 2.5cm,ellipse,fill=blue!10] (C1) {$X_1$};
\node [right of = C1, node distance = 2.5cm,ellipse,fill=blue!10] (C2) {$X_2$};
\path[->] (C1) edge node [above = -0.15cm] {\tiny $1$} (C2);
\path[->] (C3) edge node [above = -0.15cm] {\tiny $2$} (C4);
\path[red,line width=0.75mm,->, bend left = 10] (C5) edge node [above = -0.15cm] {\tiny $\mathbf{3}$} (C6);
\path[->, bend left = 10] (C6) edge node [below = -0.15cm] {\tiny $4$} (C5);
\path[dashed,->] (C4) edge node [above = -0.15cm] {\tiny $D_1$} (C1);
\path[red,line width=0.75mm,dashed,->] (C6) edge node [below = -0.15cm] {\tiny $\mathbf{D_2}$} (C1);
%\draw[blue,thick,dashed]     ($(C3.north west)+(-0.0,0.0)$) rectangle ($(C2.south east)+(0.0,-0.0)$);
\end{tikzpicture}
\end{center}
This $\mathcal{Y}$-exterior forest cannot be balanced, which seems to suggest that the discrete state space system exhibits an extinction event by Corollary \ref{maincorollary}. We can, however, see that this dom-CRN is not admissible. To apply steps 2(a-d) of the algorithm presented earlier, we omit all domination reactions so that $\mathcal{D} = \O$ (step (d)). We must then add $X_1 + X_5$ and $X_3 + X_5$ to $\mathcal{Y}$ since these complexes are now terminal in the resulting dom-CRN. Since we then have $\mathcal{Y} = \mathcal{C}$ (step (e)), we terminate the procedure.
%We arrive at the same balancing equalities and inequalities (\ref{conditions}) as Example \ref{example100}, so that every $\mathcal{Y}$-exterior forest on this dom-CRN is balanced. Since the connectivity and domination relations are shared with Example \ref{example100}, we can exhaust nontrivial $\mathcal{Y}$-admissible dom-CRNs in the same way, and we conclude that Corollary \ref{maincorollary} is inconclusive.

In fact, this example does not exhibit an extinction event for most initial conditions. Provided $\mathbf{X}_4 > 0$, $\mathbf{X}_5 > 0$, and any one of $\mathbf{X}_1$, $\mathbf{X}_2$, and $\mathbf{X}_3$ is positive, every complex is recurrent. This example therefore highlights the importance in guaranteeing that the resulting dom-CRN is admissible when expanding the absorbing complex set $\mathcal{Y}$. Care must be taken when expanding the absorbing complex set $\mathcal{Y}$. An example which is structurally identical, but which has a guaranteed extinction event, is given by Example 3.9 in the companion paper \cite{J-A-C-B}.
\end{example}

\section{Description of Algorithm \ref{alg:implementation} modules}
\label{appendixc}

In this Appendix, we present the details of the modules which are used in Algorithm \ref{alg:implementation}. Several of the modules involve mixed-integer linear programming (MILP). Recall that a MILP problem in the vector of decision variables $\mathbf{x} = (x_1, \ldots, x_k) \in \mathbb{R}^k$ may be written in the form
\begin{equation}
\label{milp}
\begin{split}
& \mbox{minimize} \; \; \; \mathbf{c}^T \mathbf{x} \\
\mbox{subject to} \; \; & \left\{ \begin{split} & A \; \mathbf{x} = \mathbf{a} \\
& B \; \mathbf{x} \leq \mathbf{b} \\
& x_j \in \mathbb{Z} \mbox{ for } j \in I, I \subseteq \{1, \ldots, k\} \end{split} \right.
\end{split}
\end{equation}
where $\mathbf{c} \in \mathbb{R}^k$, $A \in \mathbb{R}^{p \times k}$, $B \in \mathbb{R}^{q \times k}$, $\mathbf{a} \in \mathbb{R}^{p}$, and $\mathbf{b} \in \mathbb{R}^{q}$ \cite{Sz2}.

\begin{itemize}
\item[] \hspace{-0.4in} CreateModel($N$):\\This module takes the CRN $N$ and generates the structural matrices $Y$, $I_a$, $\Gamma$, $I_s$, and $A$. We utilize the CRN module package created for Python by Elisa Tonello which reads in a string file of reactions, and can also interface with SBML files \cite{tonello2016}.
\item[] \hspace{-0.4in} IsSubconservative($N$):\\This module determines whether a given CRN $N$ is subconservative or not by running the following MILP:
\begin{equation}
\label{conservativeLP}
\begin{split}
& \mbox{minimize} \; -z \\
\mbox{subject to} \; \; & \left\{ \begin{split} z - c_i & \leq 0, \mbox{ for } i \in \{1, \ldots, m\}\\
\mathbf{c}^T \Gamma & \leq \mathbf{0}^T %\\
%& \mathbf{c} \in \mathbb{R}_{\geq 0}^m
 \end{split} \right.
\end{split}
\end{equation}
over the vector of decision variables $\mathbf{c} = (c_1, \ldots, c_m) \in \mathbb{R}_{\geq 0}^m$. The first constraint set guarantees all components of $\mathbf{c}$ are strictly positive if they can be, while the second constraint set imposes that the CRN is subconservative. If (\ref{conservativeLP}) returns an optimal value less than zero, the module returns {\tt true}; otherwise, it returns {\tt false}. If the CRN is subconservative, it furthermore runs the module IsConservative($N$), which replaces the second constraint set in (\ref{conservativeLP}) with $\mathbf{c}^T \Gamma = \mathbf{0}^T$.

\item[] \hspace{-0.4in} DominationSet($N$):\\This modules determines the domination set $\mathcal{D}^*$ of the CRN (Definition \ref{complexdomination}) by looping over all pairs of complexes $y_i$ and $y_j$, $i \not= j$, and assigning $D^*_{ij} = 1$ if $y_j \leq y_i$ and $D^*_{ij} = 0$ otherwise.
\item[] \hspace{-0.4in} FindTerm($N$): This module takes in the Laplacian $A$ of the CRN $N$ and runs the following linear program:
\begin{equation}
\label{findterm}
\begin{split}
& \mbox{minimize} \; -\sum_{i=1}^n y_i \\
& \mbox{subject to} \; \;
 \begin{split} \; \; \displaystyle{A \; \mathbf{y}} & = 0 \end{split}
\end{split}
\end{equation}
over $\mathbf{y} \in [0,1/\epsilon]^n$. By Theorem \ref{kerneltheorem}, this has a solution only on the support of the terminal complexes in the CRN, and by Lemma 3.1 of \cite{J-A-C-B} this contains the terminal complexes of every dom-CRN. If $y_i > 0$,  we set $Y_i = 1$; otherwise $Y_i = 0$.
\item[] \hspace{-0.4in} FindAdmissibleDom($N$,$\mathcal{Y}^*$,$\mathcal{D}^*$):\\Given the domination set $\mathcal{D}^*$ and an initial complex set $\mathcal{Y}^* \subseteq \mathcal{C}$, this module determines the minimal absorbing complex set $\mathcal{Y} \supseteq \mathcal{Y}^*$ and corresponding maximal set $\mathcal{D} \subseteq \mathcal{D}^*$ such that the resulting dom-CRN is $\mathcal{Y}$-admissible (Definition \ref{dominationnetwork}). We accomplish this by running a loop which: (a) Adds complexes to $\mathcal{Y}$ which are connected by a path in the original CRN from a complex in $\mathcal{Y}$; (b) Adds complexes to $\mathcal{Y}$ which are terminal in the dom-CRN for the current $\mathcal{D}$; and (c) removes reactions in $\mathcal{D}$ which lead to complexes in the current set $\mathcal{Y}$. The loop is run until the set $\mathcal{Y}$ does not change from one iteration to the next.

\item[] \hspace{-0.4in} DominationExpandedNetwork($N$, $D$):\\This module generates structural matrices $\Gamma$, $Y$, $I_a$, $I_s$, and $A$ for the admissible dom-CRN found in the previous module (Definition \ref{dominationnetwork}). The module indexes the first $r$ reactions as in the original CRN and then adds the domination reactions with indices $\{r+1,\ldots,r+d\}$ where $d = |\mathcal{D}|$.
\item[] \hspace{-0.4in} CycleForests(dom-$N$):\\This module cycles over every combination of reactions $F$ such that every $\mathcal{Y}$-exterior reaction in the dom-CRN is the source for exactly one reaction. The exterior forest conditions of Theorem \ref{foresttheorem} are checked by the module  IsExtForest($F$). If this module returns {\tt true}, the module IsBalanced($F$) is run to determine if the forest is balanced in accordance with Definition \ref{balancing} by Theorem \ref{tieringtheorem}. If the module finds an exterior forest which is not balanced then the module confirms that the discrete state space CRN exhibits a extinction event in accordance with Corollary \ref{maincorollary}. If $F$ is balanced, it runs ExpandedY(F); otherwise, the module returns {\tt false}.
\item[] \hspace{-0.4in} IsExtForest($F$):\\This module checks whether $F$ is an exterior forest (Definition \ref{forest1}) according to Theorem \ref{foresttheorem}. The module takes in the adjacency matrix $I_a$ for the dom-CRN and $F$. We let $F_i = 1$ if $R_i \in \mathcal{R}_F \cup \mathcal{D}_F$ and $F_i = 0$ otherwise, and set a small parameter $\epsilon > 0$. We then run the following linear program:
\begin{equation}
\label{checkforest}
\begin{split}
& \mbox{minimize} \; -\sum_{i=1}^{r+d} v_i \\
\mbox{subject to} \; \;
& \left\{ \begin{split} \epsilon \; F_i - v_i & \leq 0, \; \mbox{ for } i \in \{1, \ldots, r+d\}\\
v_i - \frac{1}{\epsilon} \; F_i & \leq 0, \; \mbox{ for } i \in \{1, \ldots, r+d\} \\
\sum_{j=1}^{r+d} [I_a]_{ij} \; v_j & \leq -\epsilon + \frac{1}{\epsilon} \; Y_i, \; \mbox{ for } i \in \{ 1, \ldots, n\}\end{split} \right.
\end{split}
\end{equation}
over $\mathbf{v} = (v_1, \ldots, v_n) \in \mathbb{R}_{\geq 0}^n$. The first two constraint sets in (\ref{checkforest}) ensure that $\mathbf{v}$ has the same support on $\mathcal{R}_F \cup \mathcal{D}_F$. The third constraint set guarantees that, if $y_i \not\in \mathcal{Y}$ (i.e. $Y_i = 0$) then $[I_d \; \mathbf{v}]_i \leq -\epsilon < 0$ (there is no restriction if $y_i \in \mathcal{Y}$, i.e. $Y_i = 1$). If (\ref{checkforest}) is feasible, the module returns the value {\tt true}; otherwise, it returns {\tt false}.
\item[] \hspace{-0.4in} IsBalanced($F$):\\This module checks whether $F$ is balanced (Definition \ref{balancing}) according to Theorem \ref{tieringtheorem}. It takes in the stoichiometric matrix $\Gamma$ of the CRN, the adjacency matrix $I_a$ of the dom-CRN, and the exterior forest $F$. We let $F_i = 1$ if $R_i \in \mathcal{R}_F \cup \mathcal{D}_F$ and $F_i = 0$ otherwise, and set a small parameter $\epsilon > 0$. We then run the following linear program:
\begin{equation}
\label{checkbalancing}
\begin{split}
& \mbox{minimize} \; -\sum_{i=1}^{r} \alpha_i \\
\mbox{subject to} \; \;
& \left\{ \begin{split}\alpha_i - \frac{1}{\epsilon} \; F_i & \leq 0, \; \mbox{ for } i \in \{1, \ldots, r+d\} \\
\sum_{j=1}^{r} \Gamma_{ij} \; \alpha_j & = 0, \; \mbox{ for } i \in \{1, \ldots, m\} \\
\sum_{j=1}^{r+d} [I_a]_{ij} \; v_j & \leq \frac{1}{\epsilon} \; Y_i, \; \mbox{ for } i \in \{1, \ldots, n\}\end{split} \right.
\end{split}
\end{equation}
over $\alpha = (\alpha_1, \ldots, \alpha_{r+d}) \in \mathbb{R}_{\geq 0}^{r + d}$. The first constraint guarantees that $R_i \not\in \mathcal{R}_F \cup \mathcal{D}_F$ (i.e. $F_i = 0$) then $\alpha_i = 0$. The second constraint set guarantees that $\Gamma \; \alpha_R = 0$. The third constraint set guarantees that, if $y_i \not\in \mathcal{Y}$ (i.e. $Y_i = 0$) then $[I_a^d \; \alpha]_i \leq 0$ (there is no restriction if $y_i \in \mathcal{Y}$, i.e. $Y_i = 1$). If (\ref{checkbalancing}) the optimal value zero, corresponding to the trivial vector $\alpha_R = \mathbf{0}$, the module returns {\tt false}; otherwise, it returns {\tt true}. It also returns the vector $\alpha$.
\item[] \hspace{-0.4in} ExpandedY($F$):\\For a given vector $\alpha = (\alpha_R,\alpha_D) \in \mathbb{Z}_{\geq 0}^{r+d}$, this module returns the set of complexes in the reactions $R_k = (y_i, y_j) \in \mathcal{R}_F \cup \mathcal{D}_F$ such that $\alpha_k > 0$. Note that if $\alpha$ has full support, then we set $\mathcal{Y} = \mathcal{C}$, which is one of the stopping criteria for Algorithm \ref{alg:implementation}.
\item[] \hspace{-0.4in} WriteOutput:\\This module writes the output into a {\tt .dat} file.
\end{itemize}

\section{Result of BioModels Database Search}
\label{appendixd}

The following 86 models from the European Bioinformatics BioModels Database were identified by the algorithm as being subconservative and having a discrete extinction event. The models are labeled with (C) if they were conservative as well as subconservative, (S) is they had a source only species, and (P) if they had a product only species.\\

\scriptsize
\begin{multicols}{3}
\begin{enumerate}
\item[]
{\tt biomd0000000013(C)(S)(P)}
\item[]
{\tt biomd0000000017(C)(S)(P)}
\item[]
{\tt biomd0000000040(C)(P)}
\item[]
{\tt biomd0000000046(C)(S)(P)}
\item[]
{\tt biomd0000000050(S)(P)}
\item[]
{\tt biomd0000000052(S)(P)}
\item[]
{\tt biomd0000000069(C)(S)}
\item[]
{\tt biomd0000000071(S)(P)}
\item[]
{\tt biomd0000000080(C)(S)}
\item[]
{\tt biomd0000000082(C)(S)}
\item[]
{\tt biomd0000000090(S)(P)}
\item[]
{\tt biomd0000000104(C)(S)(P)}
\item[]
{\tt biomd0000000172(C)(P)}
\item[]
{\tt biomd0000000176(C)(P)}
\item[]
{\tt biomd0000000177(C)(S)(P)}
\item[]
{\tt biomd0000000178(C)(S)(P)}
\item[]
{\tt biomd0000000190(S)}
\item[]
{\tt biomd0000000191(S)}
\item[]
{\tt biomd0000000198(C)(S)}
\item[]
{\tt biomd0000000199(C)(S)(P)}
\item[]
{\tt biomd0000000203(S)(P)}
\item[]
{\tt biomd0000000204(S)(P)}
\item[]
{\tt biomd0000000209(C)(S)(P)}
\item[]
{\tt biomd0000000210(C)(S)(P)}
\item[]
{\tt biomd0000000211(C)(P)}
\item[]
{\tt biomd0000000225(C)(S)(P)}
\item[]
{\tt biomd0000000233(C)(S)(P)}
\item[]
{\tt biomd0000000236(S)}
\item[]
{\tt biomd0000000243(S)(P)}
\item[]
{\tt biomd0000000245(S)(P)}
\item[]
{\tt biomd0000000253(S)}
\item[]
{\tt biomd0000000262(S)}
\item[]
{\tt biomd0000000263(S)}
\item[]
{\tt biomd0000000264(S)}
\item[]
{\tt biomd0000000267(C)(S)(P)}
\item[]
{\tt biomd0000000281(S)}
\item[]
{\tt biomd0000000282(C)(S)(P)}
\item[]
{\tt biomd0000000283(C)(S)(P)}
\item[]
{\tt biomd0000000296(C)(S)(P)}
\item[]
{\tt biomd0000000305(C)(S)(P)}
\item[]
{\tt biomd0000000320(S)}
\item[]
{\tt biomd0000000321(S)}
\item[]
{\tt biomd0000000333(C)(S)(P)}
\item[]
{\tt biomd0000000335(C)(S)(P)}
\item[]
{\tt biomd0000000336(S)}
\item[]
{\tt biomd0000000357(C)(S)(P)}
\item[]
{\tt biomd0000000358(C)(S)(P)}
\item[]
{\tt biomd0000000359(C)(S)}
\item[]
{\tt biomd0000000360(C)(S)}
\item[]
{\tt biomd0000000361(C)(S)}
\item[]
{\tt biomd0000000362(C)(S)(P)}
\item[]
{\tt biomd0000000363(C)(S)(P)}
\item[]
{\tt biomd0000000364(S)(P)}
\item[]
{\tt biomd0000000366(C)(S)(P)}
\item[]
{\tt biomd0000000383(S)}
\item[]
{\tt biomd0000000384(S)}
\item[]
{\tt biomd0000000385(S)}
\item[]
{\tt biomd0000000386(S)}
\item[]
{\tt biomd0000000387(S)}
\item[]
{\tt biomd0000000388(S)(P)}
\item[]
{\tt biomd0000000389(C)(S)(P)}
\item[]
{\tt biomd0000000390(C)(S)}
\item[]
{\tt biomd0000000411(S)(P)}
\item[]
{\tt biomd0000000415(S)(P)}
\item[]
{\tt biomd0000000436(S)}
\item[]
{\tt biomd0000000437(S)(P)}
\item[]
{\tt biomd0000000438(C)(S)(P)}
\item[]
{\tt biomd0000000464(C)(S)(P)}
\item[]
{\tt biomd0000000465(C)(S)(P)}
\item[]
{\tt biomd0000000475(C)(S)(P)}
\item[]
{\tt biomd0000000478(C)(S)(P)}
\item[]
{\tt biomd0000000481(C)(S)(P)}
\item[]
{\tt biomd0000000495(S)}
\item[]
{\tt biomd0000000502(S)}
\item[]
{\tt biomd0000000513(C)(P)}
\item[]
{\tt biomd0000000523(C)(S)(P)}
\item[]
{\tt biomd0000000524(C)(S)(P)}
\item[]
{\tt biomd0000000525(C)(S)(P)}
\item[]
{\tt biomd0000000526(C)(S)(P)}
\item[]
{\tt biomd0000000529(C)(S)(P)}
\item[]
{\tt biomd0000000540(C)(S)(P)}
\item[]
{\tt biomd0000000541(C)(S)(P)}
\item[]
{\tt biomd0000000546(C)(S)(P)}
\item[]
{\tt biomd0000000566(C)(S)}
\item[]
{\tt biomd0000000567(C)(S)}
\item[]
{\tt biomd0000000572(S)(P)}
\end{enumerate}
\end{multicols}

\end{appendices}

\end{document}